\numberwithin{equation}{section}
\newtheorem{theorem}{Theorem}[section]
\newtheorem{corollary}[theorem]{Corollary}
\newtheorem{lemma}[theorem]{Lemma}
\newtheorem{prop}[theorem]{Proposition}
\newtheorem{conjecture}[theorem]{Conjecture}
\newtheorem{remark}[theorem]{Remark}
\newtheorem{defn}[theorem]{Definition}
\newcommand\ga{\gamma}
\newcommand\la{\lambda}
\newcommand{\R}{\mathbb{R}}
\newcommand{\Z}{\mathbb{Z}}
\newcommand{\pa}{\partial}
\newcommand{\na}{\nabla}
\renewcommand{\bar}[1]{\overline{#1}}
\newcommand{\cE}{\mathcal{E}}
\DeclareMathOperator\erf{erf}
\def\Z{\mathbb{Z}}
\def\R{\mathbb{R}}
\begin{document}

\title{Equivalent formulations of the oxygen depletion problem, other implicit free boundary value problems, and implications for numerical approximation}

\date{}

\author{Xinyu Cheng\footnote{School of mathematical sciences, Fudan University \Letter\  xycheng@fudan.edu.cn} , Zhaohui Fu\footnote{University of British Columbia, \Letter\ 	fuzh@math.ubc.ca}, Brian Wetton\footnote{University of British Columbia, \Letter\	wetton@math.ubc.ca.}}


\maketitle

\begin{abstract}                
 The Oxygen Depletion problem is an implicit free boundary value problem. The dynamics allow topological changes in the free boundary. We show several mathematical formulations of this model from the literature and give a new formulation based on a gradient flow with constraint. All formulations are shown to be equivalent. We explore the possibilities for the numerical approximation of the problem that arise from the different formulations. We show a convergence result for an approximation based on the gradient flow with constraint formulation that applies to the general dynamics including topological changes. More general (vector, higher order) implicit free boundary value problems are discussed. Several open problems are described.  
\end{abstract}

\section{Introduction}

The Oxygen Depletion (OD) problem is a free boundary value problem of implicit type. Implicit here means that the free boundary is specified implicitly by an extra boundary condition rather than explicitly as an interface normal velocity as for a Stefan problem \cite{Ste90,Stefan1,Stefan2}. 

The OD problem was introduced as a model of oxygen consumption and diffusion in living tissue but several other problems have similar structure. Some of the early work is described in \cite{Crank} with a great deal of subsequent interest from the analysis and numerical research communities in \cite{Ste90,schaffer,FP81,rogers,oxygen}. Reference \cite{oxygen} has a review of much of the previous work. In the current work, we pursue an understanding of the analysis of the OD problem as the simplest example of an implicit moving free boundary value problem. We are motivated by an interest in the analysis and computation of a general class of implicit free boundary value problems. 

By way of introduction, we present the OD problem in 1D for an unknown $u(x,t)$ for $x \in [0, s(t)]$ with a single free boundary $x=s(t)$ and a no flux condition $u_x =0$ at $x=0$. At the free boundary, $u=0$ and additionally $u_x = 0$. These two conditions implicitly define the free boundary $x=s(t)$. 
The solution obeys 
\begin{equation}
\label{OD_introduction} 
u_t = u_{xx} - 1
\end{equation} 
for $x \in [0, s(t)]$ and it is natural to extend $u \equiv 0$ for $x > s(t)$ in a $C^1$ continuous way. We consider positive initial conditions for $u$ in $[0, s(0))$. This is one of the forms of the OD problem foreshadowed by the title. We consider several formulations in the literature and in new results we show they are all equivalent. We introduce a new formulation as the $L_2$ gradient flow with constraint on the energy from the elliptic obstacle problem. The obstacle problem has had considerable interest in the literature \cite{Monotone_Forcardi,cafRE,Weiss,Lin_Mon,Mon03}. Some discussion of the numerical methods that follow from the different formulations is given. 

\begin{remark}
We highlight that the extra boundary condition for implicit free boundary value problems does not explicitly contain the interface velocity, hence this velocity is determined {\em implicitly}. For steady state free boundary problems, where the interface velocity is zero, the difference between implicit and explicit formulations disappears.
\end{remark} 

\begin{remark}
The Oxygen Depletion problem is also known as the Oxygen Diffusion problem in the literature. We prefer the former name as it is the depletion, the second term on the RHS of (\ref{OD_introduction}), rather than the diffusion, the first term on the RHS of (\ref{OD_introduction}), that leads to the formation of free boundaries. 
\end{remark} 


We invite the reader to view computational examples of the dynamics in Figures~\ref{f:ODex1}, \ref{f:ODex2}, and \ref{f:ODex3}. Solutions of (\ref{OD_introduction}) can go negative, but physically relevant values of concentration have $u \geq 0$. In the 1D case, preserving nonnegativity results in the break up or merger of intervals where $u>0$ as shown in Figure~\ref{f:ODex2}. Topological change can be more complex in higher dimensions as seen in Figure~\ref{f:ODex3}. Some of the problem formulations we consider can handle topological changes while others cannot. 

The paper is organized as follows. In Section~\ref{s:equivalence} we present the different formulations and show their equivalence. In Section~\ref{s:numerical} we present two numerical schemes. One scheme gives high accuracy solutions to the 1D problem without topological change. The other scheme, based on our new gradient formulation of the problem, can be applied in higher dimensions and can handle topological changes. A convergence proof for this new scheme is given. In Section~\ref{s:other} we present some other implicit free boundary value problems of interest and indicate how our results can be extended to them, with some open questions. The analysis of a biharmonic problem with gradient flow structure follows directly from our new formulation of the OD problem.    
We end with a short Summary that includes a list of open problems. 

\section*{Notation}
We define the space 
$H^1_+(\Omega)\coloneqq \{u\in H^1(\Omega): u\geq 0 \ a.e. , \frac{\partial u}{\partial n}|_{\partial \Omega }=0\}$. 
For simplicity we consider $d-$dimensional open connected bounded domain $\Omega=(0,1)^d$ with homogeneous Neumann boundary conditions, where $d=1,2,3.$ We further denote $\mathcal{J}$ to be collection of functions $v\in L^2(0,T;H^1(\Omega))$ such that $v(t)\in H^1_+(\Omega)$ for a.e. $t\in(0,T)$. In some instances, we denote the time derivative by $\dot{u}(t)$ and the space derivative in 1D case by $u'(x)$. s
Given two quantities $A$ and $B$, we use $A\lesssim B$ to denote that there exists a constant $C>0$ such that $A \leq C\cdot B$.

\section{Equivalent Formulations}
\label{s:equivalence} 

\subsection{1D formulations without topological change} 

%

\subsubsection{Standard formulation in 1D}
\label{section_od}
The one-dimensional oxygen depletion problem with associated free boundary and initial conditions is as follows:
\begin{equation}
\label{eq:standard} 
\left\{
\begin{aligned}
&u_t=u_{xx}-1,&&\ 0\leq x\leq s(t)  \\
&u(x,t)=0, &&\ x>s(t)\\
&u_x(0,t)=0,&& \ t>0\\
&u(s(t),t)=u_x(s(t),t)=0,&&\ t>0\\
&u(x,0)=u_0(x),&& \ 0\leq x\leq 1 \\
&s(0)=1.&&
\end{aligned}\right.\end{equation}
We assume here that $u_0$ satisfies all necessary smoothness and compatibility assumptions needed in the analyses cited below. By literature convention, we consider here a problem with a fixed, no-flux boundary condition at $x=0$ and only one free boundary $s(t) >0$. Uniqueness and lack of topological change when $u_0^\prime \leq 0$ follows from a modified maximum principle argument \cite{FP81}. 

Existence can be seen by considering $v = u_t$ which satisfies a standard Stefan problem \cite{Crank} with explicit interface velocity:
\begin{equation*}
\left\{
\begin{aligned}
&v_t=v_{xx},&&\ 0\leq x\leq s(t)  \\
&v_x(0,t)=0,&& \ t>0\\
&v(s(t),t)=0,\ v_x(s(t),t)=-\dot{s}(t),&&\ t>0 \\
&s(0)=1.&&
\end{aligned}\right.\end{equation*}
One can check the function $u=\int_0^t v\ d\tau$ solves the oxygen depletion problem. To prove existence and uniqueness of Stefan problem, one can verify that the map 
$$\mathcal{T}(s)(t)\coloneqq1-\int_0^t v_x(s(\tau),\tau))\ d\tau ,\ T\geq t\geq 0,$$
defines a contraction map \cite{Magenes}. 

\begin{remark}
\label{rem:high_order} 
The reformulation in $v = u_t$ to an explicit free boundary problem with interface velocity equal to $- v_x$ can be reinterpreted as a normal velocity for the problem for $u$ with velocity equal  to $-v_x = - u_{tx} = -u_{xxx}$. The authors are not aware of any analysis or computational methods based on this velocity expression with higher order spatial derivatives. 
\end{remark}

 \subsubsection{Mapped domain formulation in 1D}
\label{s:yproblem}

Considering the same smooth solutions without topological change in 1D discussed above, we consider 
$s(t) > 0$ in $t\in[0,T]$, take $y=x/s(t)$, and reformulate the oxygen depletion problem as 
\begin{equation}
\label{eq_yproblem}
u_{yy}+\dot{s}syu_y -s^2u_t-s^2=0
\end{equation}
with boundary conditions $u_y(0,t) = u(1,t) = u_y(1,t) = 0$. 
Over a short time period, we assume that $\dot{s}(t)$ and $s(t)$ are uniformly bounded, thus the linear operator is parabolic. Assuming $s(t)$ is known, uniqueness of $u$ is not an issue; however, to prove uniqueness of the solution pair $(\tilde{u},s)$, we introduce the map $\mathcal{G}$: $X\to Y$, where $X$ is the closed subspace of $H^1(H^2([0,s(t)]); [0,T])\times C^1([0,T])$ that solves OD system and $Y$ is the closed subspace of $H^1(H^2([0,1]); [0,T])\times C^1([0,T])$ that solves the reformulated system:
\begin{align*}
    \mathcal{G}((u(x,t),s(t))=(u(y,t),s(t))\coloneqq (U(y\cdot s(t),t),s(t)).
\end{align*}
where $U(x,t)$ is the solution from the previous section. 
One can check that the map $\mathcal{G}$ is a bijection and so all solutions of (\ref{eq_yproblem}) are equivalent to the solutions in the standard formulation of Section~\ref{section_od}. 

A numerical method based on this formulation is presented in Section~\ref{s:ymethod}. The computations in Figure~\ref{f:ODex1} are done with a method based on this formulation.

\begin{remark}
\label{rem:yproblem} 
A direct analysis of this formulation would be useful as a stepping stone to a convergence proof for the numerical approximation in Section~\ref{s:ymethod} and an analysis of the general class of problems in Section~\ref{s:other}. We have not been able to make progress on such an analysis. There are subtleties in the problem: note that changing $-s^2$ to $+s^2$ makes the problem ill defined as $s(t) = +\infty$ for $t>0$ in that case. 
\end{remark} 

\subsection{Higher dimensional formulations that allow topological change}

A weak form of the solution can be introduced using a variational inequality approach (\ref{eq_paravari})\cite{vari_pLap,Rudd_Sch_vari}. This is described in Section~\ref{s:variational} below. We use this formulation as the basis for equivalence to the others. This formulation is amenable to approximation using the Augmented Lagragian Method \cite{AugLag_actset,AugLag_paravar}. 
We then introduce a new formulation as $L^2$ gradient flow with constraint on the energy from the elliptic obstacle problem in Section~\ref{s:gradient}. 
The computations in Figures~\ref{f:ODex2} and~\ref{f:ODex3} are done with a method based on this formulation. We show a regularized approach with parameter $\epsilon$, similar to the approach in \cite{rogers}, in Section~\ref{s:regular}.  

\subsubsection{A parabolic variational inequality formulation}
\label{s:variational} 

To proceed with the discussion of the problem in higher dimensions with topological changes, we consider the standard approach to weak solutions in this setting: a variational inequality formulation \cite{Lions,vari_pLap}. This approach has been well studied and we describe results in the literature.

We consider the following problem: find a function $u\in \mathcal{J}$ with $u(0)=u_0\in H_+^1(\Omega)$ that solves 
\begin{align}\label{eq_paravari}
    \int_0^t\int_\Omega u_t\cdot (v-u)+\int_0^t\int_\Omega \nabla u\cdot \nabla (v-u)\geq \int_0^t\int_\Omega u-v;\ \mbox{for all}\ v\in\mathcal{J},\ \mbox{a.e.}\ t\in(0,T). 
\end{align}
\begin{prop}
The variational inequality \eqref{eq_paravari} has at most one solution and in fact suppose $u_1$ and $u_2$ solves \eqref{eq_paravari} with distinct initial conditions $u_{1_0}$ and $u_{2_0}$ then 
\begin{equation}
    \Vert u_1-u_2\Vert_{L^\infty(0,T;L^2(\Omega))}\leq \Vert u_{1_0}-u_{2_0}\Vert_{L^2(\Omega)}.
\end{equation}
\end{prop}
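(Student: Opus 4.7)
The plan is to exploit the symmetry of the inequality by testing the VI for $u_1$ against $u_2$ and vice versa, then adding. Since both solutions are assumed to lie in $\mathcal{J}$, the substitutions $v=u_2$ in the inequality for $u_1$ and $v=u_1$ in the inequality for $u_2$ are admissible. Writing these out gives
\begin{equation*}
\int_0^t\!\!\int_\Omega u_{1,t}(u_2-u_1) + \int_0^t\!\!\int_\Omega \nabla u_1\cdot\nabla(u_2-u_1) \geq \int_0^t\!\!\int_\Omega (u_1-u_2),
\end{equation*}
and the analogous statement with the indices swapped. Adding the two, the forcing contributions on the right-hand sides cancel exactly, the gradient terms combine into $-\int_0^t \|\nabla(u_1-u_2)\|_{L^2}^2$, and the time-derivative terms collapse to $-\int_0^t\int_\Omega (u_1-u_2)_t(u_1-u_2)$. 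Rearranging yields
\begin{equation*}
\int_0^t\!\!\int_\Omega (u_1-u_2)_t(u_1-u_2) + \int_0^t \|\nabla(u_1-u_2)\|_{L^2}^2 \, d\tau \leq 0.
\end{equation*}

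Next I would identify the first term with $\tfrac12\|u_1(t)-u_2(t)\|_{L^2}^2 - \tfrac12\|u_{1_0}-u_{2_0}\|_{L^2}^2$. Dropping the nonnegative gradient contribution and taking the essential supremum in $t$ gives
\begin{equation*}
\Vert u_1-u_2\Vert_{L^\infty(0,T;L^2(\Omega))}^2 \leq \Vert u_{1_0}-u_{2_0}\Vert_{L^2(\Omega)}^2,
\end{equation*}
which is the stated estimate; uniqueness follows by specializing to $u_{1_0}=u_{2_0}$.

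The main technical obstacle is justifying the integration-by-parts identity for $\int_0^t\int_\Omega (u_1-u_2)_t(u_1-u_2)$. For $u\in \mathcal{J}$ the time derivative $u_t$ is naturally an element of $L^2(0,T;H^{-1}(\Omega))$ rather than $L^2(0,T;L^2(\Omega))$, so the pointwise pairing used above is not literally defined. The standard remedy is the Lions--Magenes integration-by-parts formula: for $w\in L^2(0,T;H^1)$ with $w_t\in L^2(0,T;H^{-1})$, one has $w\in C([0,T];L^2)$ and $\langle w_t,w\rangle = \tfrac{d}{dt}\tfrac12\|w\|_{L^2}^2$ in the distributional sense. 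Applying this to $w=u_1-u_2$ legitimizes the identity. If one wishes to avoid invoking this directly, the same conclusion can be reached by a Steklov-averaging (time mollification) of the test functions and passing to the limit, which is the route usually taken for obstacle-type parabolic VIs; I would cite Lions \cite{Lions} for this step rather than reproduce it. Once this technicality is handled, the rest is bookkeeping.
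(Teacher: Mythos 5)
Your argument is exactly the paper's: test the inequality for $u_1$ with $v=u_2$ and vice versa, add so the forcing terms cancel, and read off $\int_0^t\int_\Omega w_t\,w+\int_0^t\int_\Omega|\nabla w|^2\le 0$ for $w=u_1-u_2$, whence the $L^\infty_t L^2_x$ bound. Your added remark on justifying $\int_0^t\langle w_t,w\rangle=\tfrac12\|w(t)\|_{L^2}^2-\tfrac12\|w_0\|_{L^2}^2$ via the Lions--Magenes lemma is a welcome technical point that the paper passes over silently, but it does not change the route.
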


\begin{proof}
Note $u_j\in\mathcal{J}$ satisfies \eqref{eq_paravari} for j=1,2, in particular
\begin{align*}
    && \int_0^t\int_\Omega \partial_t u_1\cdot (u_2-u_1)+\int_0^t\int_\Omega \nabla u_1\cdot \nabla (u_2-u_1)\geq \int_0^t\int_\Omega u_1-u_2,\\
    && \int_0^t\int_\Omega \partial_t u_2\cdot (u_1-u_2)+\int_0^t\int_\Omega \nabla u_2\cdot \nabla (u_1-u_2)\geq \int_0^t\int_\Omega u_2-u_1.
\end{align*}
Summing the two inequalities above and denote $w=u_1-u_2$, one has 
\begin{align*}
 &\int_0^t\int_\Omega \partial_t w\cdot w+\int_0^t\int_\Omega \nabla w\cdot \nabla w\leq0\\
 \implies &\int_0^t\int_\Omega (w^2)_t\leq 0\implies  \Vert w\Vert_{L^\infty(0,T;L^2(\Omega))}\leq \Vert w_0\Vert_{L^2(\Omega)}.
\end{align*}
\end{proof}
\begin{theorem}
There exists a unique solution to the variational inequality \eqref{eq_paravari}.
\end{theorem}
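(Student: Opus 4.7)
The plan is to establish existence by penalization, since uniqueness has just been proved. For $\epsilon > 0$, I would introduce the regularized parabolic problem
\begin{equation*}
\partial_t u^\epsilon - \Delta u^\epsilon + 1 = \tfrac{1}{\epsilon}[u^\epsilon]_-, \qquad u^\epsilon(0) = u_0, \qquad \tfrac{\partial u^\epsilon}{\partial n}\big|_{\partial\Omega} = 0,
\end{equation*}
with $[s]_- := \max(-s,0)$. The nonlinearity is globally Lipschitz and monotone, so classical Galerkin or semigroup theory produces a unique $u^\epsilon \in L^2(0,T;H^1(\Omega)) \cap H^1(0,T;L^2(\Omega))$.

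Next, I would derive $\epsilon$-independent a priori estimates. Testing against $u^\epsilon$ and applying Gronwall yields an $L^\infty_t L^2_x \cap L^2_t H^1_x$ bound together with the crucial dissipation $\frac{1}{\epsilon}\|[u^\epsilon]_-\|_{L^2_t L^2_x}^2 \leq C$. Testing against $\partial_t u^\epsilon$, using the chain-rule identity $\int_\Omega [u^\epsilon]_- \partial_t u^\epsilon = -\frac{d}{dt}\frac{1}{2}\|[u^\epsilon]_-\|_{L^2}^2$ and $[u_0]_- = 0$, provides an $L^2_t L^2_x$ bound on $\partial_t u^\epsilon$ and an $L^\infty_t L^2_x$ bound on $\nabla u^\epsilon$. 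Aubin--Lions then produces a subsequence with $u^\epsilon \to u$ strongly in $C([0,T];L^2(\Omega))$, weakly in $L^2(0,T; H^1(\Omega))$, and $\partial_t u^\epsilon \rightharpoonup \partial_t u$ in $L^2_t L^2_x$. The dissipation bound forces $[u^\epsilon]_- \to 0$ in $L^2$, so $u \geq 0$ almost everywhere and $u \in \mathcal{J}$.

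To recover the variational inequality, I would test the penalized equation against $(v - u^\epsilon)$ for arbitrary $v \in \mathcal{J}$. Since $v \geq 0$ and $[u^\epsilon]_- \geq 0$,
\begin{equation*}
\tfrac{1}{\epsilon}\int_0^t \int_\Omega [u^\epsilon]_-(v - u^\epsilon) = \tfrac{1}{\epsilon}\int_0^t \int_\Omega [u^\epsilon]_- v + \tfrac{1}{\epsilon}\int_0^t \int_\Omega |[u^\epsilon]_-|^2 \geq 0,
\end{equation*}
which yields
\begin{equation*}
\int_0^t \int_\Omega \partial_t u^\epsilon (v - u^\epsilon) + \int_0^t \int_\Omega \nabla u^\epsilon \cdot \nabla(v - u^\epsilon) \geq \int_0^t \int_\Omega (u^\epsilon - v).
\end{equation*}

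The right-hand side passes to the limit by strong $L^2$ convergence. The main obstacle is the left-hand side, which contains the weak-times-weak products $\int \partial_t u^\epsilon\, u^\epsilon$ and $\int |\nabla u^\epsilon|^2$. I would handle the first via the identity $\int_0^t \int_\Omega \partial_t u^\epsilon\, u^\epsilon = \frac{1}{2}\|u^\epsilon(t)\|_{L^2}^2 - \frac{1}{2}\|u_0\|_{L^2}^2$ combined with strong $L^2$ convergence of $u^\epsilon(t)$, and the second by weak lower semicontinuity of $\|\nabla\cdot\|_{L^2}^2$. Both give $\limsup$ bounds of the correct sign, and inserting them into the displayed inequality produces (\ref{eq_paravari}) for the limit $u$. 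The initial condition $u(0) = u_0$ is preserved because $u^\epsilon \to u$ in $C([0,T];L^2)$, completing the argument.
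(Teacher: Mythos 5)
Your penalization argument is correct, and it is a genuinely different route from the paper's. The paper dispatches existence in one line by appealing to a ``standard monotone operator argument'' with a citation; it never writes out a proof of this theorem, although it does construct the solution twice elsewhere by other means --- once as the $k\to 0$ limit of time-discrete energy minimizers (Rothe's method, Lemma~\ref{lem_RoMet}) and once as the $\epsilon\to 0$ limit of the regularization \eqref{eq_epi_appro}, which mollifies the reaction term $\chi_{\{u>0\}}$ into $f_\epsilon(u_\epsilon)$ and relies on the maximum principle to get monotone pointwise convergence. Your scheme penalizes the constraint violation $[u^\epsilon]_-$ instead of smoothing the forcing, which buys you purely energetic, dimension-independent estimates: the dissipation bound $\tfrac1\epsilon\|[u^\epsilon]_-\|_{L^2_{t,x}}^2\le C$ is exactly what forces $u\ge 0$ in the limit without any comparison principle, and the $\partial_t u^\epsilon$ test gives the $L^2_tL^2_x$ time-derivative bound needed for Aubin--Lions. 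Your handling of the two quadratic terms in the limit is the one delicate point and you get the signs right: $-\int_0^t\int|\nabla u^\epsilon|^2$ sits on the large side of the inequality, so weak lower semicontinuity of the Dirichlet energy only strengthens the $\ge$, while $\int_0^t\int \partial_t u^\epsilon\,u^\epsilon$ converges exactly via the identity $\tfrac12\|u^\epsilon(t)\|_{L^2}^2-\tfrac12\|u_0\|_{L^2}^2$ and the $C([0,T];L^2)$ convergence. The trade-off is that the paper's $f_\epsilon$ route additionally identifies the limit of the reaction term as $\chi_{\{u>0\}}$ (useful for the complementarity interpretation), whereas your penalization proves existence more directly but says nothing about the structure of $\partial_t u-\Delta u+1$ on the coincidence set; for the purposes of this theorem that is not needed.
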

\noindent Note that this can be done by a standard monotone operator argument and we refer to \cite{vari_pLap}.

We show equivalance to the 1D formulations. 
Any smooth solution $u$ to \eqref{eq:standard} must solve \eqref{eq_paravari} in the 1D case and by uniqueness the solution to \eqref{eq_paravari} therefore solves OD. To see this, we first observe that $u\geq0$ and therefore $u\in\mathcal{J}$. Resulting from that, for any $v\in\mathcal{J}$ and for a.e. $t\in(0,T)$ by applying integration by parts we obtain that
\begin{align*}
    \int_0^t\int_0^1 u_t\cdot (v-u)+\int_0^t\int_0^1  u_x\cdot (v_x-u_x)=&\int_0^t\int_0^{s(\tau)}(u_t-u_{xx})(v-u)\ dxd\tau\\=&\int_0^t\int_0^{s(\tau)} u-v \ dxd\tau\geq \int_0^t\int_\Omega u-v .
\end{align*}


\subsubsection{A gradient flow formulation}
\label{s:gradient} 

In this section, we formulate the OD problem as the $L^2$ gradient of the energy from the elliptic obstacle problem. A formal calculation with 
\[
\cE (t) \coloneqq \int_\Omega\frac12|\nabla u|^2+u
\]
leads to 
\[
\frac{d \cE}{dt} = - \int_\Omega (\Delta u -1)^2.
\]
It is convenient to present the equivalence of the gradient flow formulation as the limit of implicit time steps as this gets us half way to the convergence result for the fully discrete method described in Section~\ref{s:brians}. The spatially continuous, time discrete solutions $u_n$ approximate $u(\cdot, nk)$, where $k$ is a time step. We consider the following minimization problem for $u = u_{n+1}$ to the following energy functional:
\begin{equation} 
\label{eq_elli_mini}
    E[u]=\int_{\Omega} \frac12 |\nabla u|^2+\frac{1}{2k} (u-u_n)^2+u,
\end{equation}
where $u \in H^1_+(\Omega)$.
Existence and uniqueness of the minimizer is guaranteed by the standard calculus of variation technique and convexity of the energy functional \cite{Struwe}.

\begin{remark}By defining the discrete energy
$\cE^{n}\coloneqq \int_\Omega\frac12|\nabla u_{n}|^2+u_{n},$ we can see that $\cE^{n+1}\leq \cE^n.$ This can be derived by considering 
$E[u]=\int_\Omega \frac12 |\nabla u|^2+u+\frac{(u-u_n)^2}{2k}$ with $E[u_{n+1}]\leq E[u_n]$. This gives the discrete gradient flow structure.
\end{remark}


We will derive the corresponding Euler-Lagrange equation for the minimizing problem following the idea from \cite{Monotone_Forcardi}. We give an adapted proof in our case for completeness.

\begin{theorem}\label{thm_mini}
Suppose $u_{n+1}$ is the unique minimizer to the energy minimizing problem \eqref{eq_elli_mini}, then $u_{n+1}$ is the (weak) solution to the following modified backward Euler scheme:  
$$\frac{u_{n+1}-u_n\cdot \chi_{\{u_{n+1}>0\}}}{k}=\Delta u_{n+1}-\chi_{\{u_{n+1}>0\}} .$$
\end{theorem}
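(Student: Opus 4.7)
The plan is to derive the Euler--Lagrange relation for this constrained minimization via the classical two-sided test-function technique, then separate behavior on $\{u_{n+1}>0\}$ from that on $\{u_{n+1}=0\}$. Since $u_{n+1}$ minimizes $E$ over the convex cone $H^1_+(\Omega)$, for every $v\in H^1_+$ the one-sided derivative $\lim_{\epsilon\downarrow 0}\tfrac{1}{\epsilon}\bigl(E[u_{n+1}+\epsilon(v-u_{n+1})]-E[u_{n+1}]\bigr)$ is nonnegative. Testing with $v=u_{n+1}+\eta$ for $\eta\in H^1_+(\Omega)$ yields
\[
\int_\Omega \nabla u_{n+1}\cdot\nabla\eta + \tfrac{1}{k}(u_{n+1}-u_n)\eta + \eta \;\geq\; 0 \quad\text{for all } \eta\in H^1_+(\Omega), \qquad (\ast)
\]
and testing with $v=(1-\epsilon)u_{n+1}\in H^1_+$ and sending $\epsilon\downarrow 0$ gives the reverse inequality at $\eta=u_{n+1}$; combining with $(\ast)$ at $\eta=u_{n+1}$ produces the complementarity identity
\[
\int_\Omega |\nabla u_{n+1}|^2 + \tfrac{1}{k}u_{n+1}(u_{n+1}-u_n) + u_{n+1}\;=\;0. \qquad (\dagger)
\]

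Next I would recast $(\ast)$ and $(\dagger)$ distributionally. By $(\ast)$ the distribution $\mu:=-\Delta u_{n+1}+\tfrac{1}{k}(u_{n+1}-u_n)+1$ is nonnegative on nonnegative test functions, hence extends to a nonnegative Radon measure on $\Omega$. After integration by parts against Neumann data, $(\dagger)$ reads $\langle\mu,u_{n+1}\rangle=\int u_{n+1}\,d\mu=0$, and since $u_{n+1}\geq 0$ and $\mu\geq 0$ this forces $\mu$ to be concentrated on the coincidence set $\{u_{n+1}=0\}$. On the noncoincidence set $\{u_{n+1}>0\}$ the vanishing of $\mu$ immediately gives $\Delta u_{n+1}=(u_{n+1}-u_n)/k+1$, which matches the claimed identity there since $\chi_{\{u_{n+1}>0\}}=1$.

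The main obstacle is showing $\Delta u_{n+1}=0$ a.e.\ on $\{u_{n+1}=0\}$; without this, the conclusion would only be a complementarity formulation rather than the sharper PDE stated in the theorem. Here I would invoke standard regularity for the elliptic obstacle problem (see e.g.\ \cite{Monotone_Forcardi,cafRE}) to upgrade $u_{n+1}$ to $H^2_{\rm loc}(\Omega)$, so that $\mu$ is in fact an $L^2_{\rm loc}$ function. Stampacchia's lemma applied to $u_{n+1}\in H^1$ yields $\nabla u_{n+1}=0$ a.e.\ on $\{u_{n+1}=0\}$; a second application to each component $\partial_i u_{n+1}\in H^1$, together with the inclusion $\{u_{n+1}=0\}\subseteq\{\partial_i u_{n+1}=0\}$ (modulo null sets), gives $\partial_j\partial_i u_{n+1}=0$ a.e.\ on $\{u_{n+1}=0\}$, whence $\Delta u_{n+1}=0$ there. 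On this set both sides of the stated equation vanish, and assembling the two cases produces the modified backward Euler identity a.e.\ in $\Omega$.
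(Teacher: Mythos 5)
Your proof is correct, but it takes a genuinely different route from the paper's. The paper first replaces the constrained problem \eqref{eq_elli_mini} by the equivalent \emph{unconstrained} minimization \eqref{eq_elli_mini2}, absorbing the constraint $u\geq 0$ into the nonsmooth term $(1-\tfrac{u_n}{k})u^+$ (Lemma \ref{lem_mini2}); it then computes the first variation in Appendix \ref{s:prop} with sign-unrestricted perturbations $u+\varepsilon\phi$, carefully tracking the sets $\{u+\varepsilon\phi\geq 0\}$ and $\{u+\varepsilon\phi<0\}$ to obtain two-sided bounds on the distribution $T(\phi)=\int_\Omega\nabla u\cdot\nabla\phi+\tfrac1k\int_\Omega u\phi+\int_\Omega\chi_{\{u>0\}}(1-\tfrac{u_n}{k})\phi$, and concludes $T\equiv 0$ by showing its density vanishes a.e.\ on both $\{u>0\}$ and $\{u=0\}$. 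You instead stay with the convex cone $H^1_+(\Omega)$, derive the variational inequality (equivalently \eqref{eq_ellivari}) and the complementarity identity, identify the nonnegative Radon measure $\mu=-\Delta u_{n+1}+\tfrac1k(u_{n+1}-u_n)+1$ supported on the contact set, and then must separately kill $\Delta u_{n+1}$ on $\{u_{n+1}=0\}$ via $H^2_{\rm loc}$ regularity and a double application of Stampacchia's lemma. The trade-off: the paper's computation is self-contained at the level of the energy and delivers the weak identity directly, at the cost of a delicate bookkeeping of the $(\cdot)^+$ terms; your argument is the classical obstacle-problem template and is cleaner conceptually, but it genuinely needs the $L^2_{\rm loc}$ (or $H^2_{\rm loc}$) regularity of $\Delta u_{n+1}$ as an input --- without it $\mu$ could retain a singular part on the free boundary and you would only obtain the complementarity system, not the stated pointwise/weak PDE. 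That regularity is available (the paper states it, citing the penalization argument of Brezis--Kinderlehrer, and its proof does not presuppose the Euler--Lagrange equation, so there is no circularity), so your argument closes; just make the dependence on that external input explicit.
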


\noindent To begin with, we consider an equivalent energy minimizing problem:
\begin{equation}\tilde{E}[u]\coloneqq\int_\Omega \frac12 |\nabla u|^2+\frac{1}{2k}u^2+(1-\frac{u_n}{k})u^++\frac{1}{2k}u_n^2,\label{eq_elli_mini2}
\end{equation}
subject to 
$$u\in\tilde{\mathcal{K}}\coloneqq \{v\in H^1(\Omega): \frac{\partial v}{\partial n}|_{\partial\Omega}=0\}$$
 where $u^+=\max(u,0)$.
 
\begin{lemma}\label{lem_mini2}
There exists a unique $\tilde{u}\in \tilde{\mathcal{K}}$ such that $$\tilde{E}[\tilde{u}]=\min_{v\in \tilde{\mathcal{K}}}\tilde{E}[v];$$
moreover such $\tilde{u}$ is the unique minimizer to \eqref{eq_elli_mini}.
\end{lemma}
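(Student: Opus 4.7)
The plan is a two-step reduction. First, I would establish existence and uniqueness of a minimizer $\tilde u \in \tilde{\mathcal K}$ for $\tilde E$ by the direct method of the calculus of variations. Then I would show that $\tilde u$ is automatically nonnegative, so that $\tilde u \in H^1_+(\Omega)$, and conclude via the algebraic identity $E = \tilde E$ on $H^1_+(\Omega)$ that $\tilde u$ is the unique minimizer of the original problem \eqref{eq_elli_mini}.

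For the first step, split $\tilde E$ as the strictly convex, weakly lower semicontinuous quadratic $\int \tfrac12 |\nabla u|^2 + \tfrac{1}{2k} u^2$, a constant, plus the term $\int (1 - u_n/k)\, u^+$. The latter is convex, and since $u \mapsto u^+$ is $1$-Lipschitz on $L^2$, Rellich compactness makes it continuous along weak $H^1$ minimizing sequences. Coercivity on $\tilde{\mathcal K}$ follows from the $\|\nabla u\|_{L^2}^2$ and $\|u\|_{L^2}^2$ terms after absorbing the linear contribution by Cauchy--Schwarz and Young. Strict convexity of the quadratic part then yields uniqueness.

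The main step is the truncation argument showing $\tilde u \geq 0$ a.e. Standard $H^1$ calculus gives $\tilde u^+, \tilde u^- \in H^1$ with $\tilde u = \tilde u^+ - \tilde u^-$, $|\nabla \tilde u|^2 = |\nabla \tilde u^+|^2 + |\nabla \tilde u^-|^2$, and $\tilde u^2 = (\tilde u^+)^2 + (\tilde u^-)^2$; moreover $\tilde u^+ \in \tilde{\mathcal K}$ with $(\tilde u^+)^+ = \tilde u^+$. A direct computation then gives
\[
\tilde E[\tilde u] - \tilde E[\tilde u^+] = \int_\Omega \tfrac12 |\nabla \tilde u^-|^2 + \tfrac{1}{2k} (\tilde u^-)^2 \geq 0,
\]
with equality iff $\tilde u^- \equiv 0$. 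Uniqueness of the minimizer forces $\tilde u^- = 0$, so $\tilde u \in H^1_+(\Omega)$. This is where the careful choice of $\tilde E$ pays off: placing $u^+$ (rather than $u$) in the linear term is exactly what lets the truncation strictly decrease the energy unless $u \geq 0$ already, and is the main obstacle the reformulation is designed to overcome.

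Finally, expanding $(u - u_n)^2 = u^2 - 2 u_n u + u_n^2$ and using $u = u^+$ on $H^1_+(\Omega)$ shows $E = \tilde E$ there; since $H^1_+(\Omega) \subset \tilde{\mathcal K}$ and the minimizer $\tilde u$ of $\tilde E$ already lies in $H^1_+(\Omega)$, it must also be the unique minimizer of $E$ on $H^1_+(\Omega)$, with uniqueness inherited from step one. The rest is bookkeeping; once $u \mapsto u^+$ is recognized as Lipschitz from $L^2$ to $L^2$, the direct-method apparatus applies without further complication.
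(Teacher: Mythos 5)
Your overall architecture (direct method for $\tilde{E}$, truncation to show the minimizer is nonnegative, then the identity $E=\tilde{E}$ on $H^1_+(\Omega)$) is sound and close in spirit to the paper's proof, which runs the same truncation inequality in the form $E[v^+]\le \tilde{E}[v]$ for arbitrary $v\in\tilde{\mathcal{K}}$ and compares the two infima using the already-established minimizer of \eqref{eq_elli_mini}. However, there is a genuine error in your first step: the claim that $\int_\Omega(1-u_n/k)u^+$ is convex is false whenever $u_n>k$ on a set of positive measure, which is the generic situation since $k$ is a small time step. Pointwise, $s\mapsto \tfrac{1}{2k}s^2+cs^+$ with $c<0$ has a downward kink at $s=0$ (the one-sided derivatives jump from $0$ to $c<0$), so $\tilde{E}$ is \emph{not} convex on $\tilde{\mathcal{K}}$, and ``strict convexity of the quadratic part'' does not by itself yield uniqueness: a strictly convex functional plus a nonconvex one need not have a unique minimizer. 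Since your step two then derives nonnegativity \emph{from} uniqueness, the logical chain as written breaks at this point.

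The gap is repairable with the tools you already have, by reordering. Your identity
\[
\tilde{E}[\tilde{u}]-\tilde{E}[\tilde{u}^+]=\int_\Omega \tfrac12|\nabla \tilde{u}^-|^2+\tfrac{1}{2k}(\tilde{u}^-)^2
\]
needs only minimality, not uniqueness: for \emph{any} minimizer $\tilde{u}$ one has $\tilde{E}[\tilde{u}^+]\ge\tilde{E}[\tilde{u}]$, which forces $\int_\Omega(\tilde{u}^-)^2\le 0$, hence $\tilde{u}^-=0$ and $\tilde{u}\in H^1_+(\Omega)$. Once every minimizer of $\tilde{E}$ is known to lie in $H^1_+(\Omega)$, where $\tilde{E}=E$ and $E$ \emph{is} strictly convex (the term $(u-u_n)^2/(2k)$ is strictly convex in $u$ and the set $H^1_+(\Omega)$ is convex), uniqueness for both problems follows at once and your final identification step goes through unchanged. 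Existence is unaffected, since the direct method needs only coercivity and weak lower semicontinuity, both of which you verify correctly without any convexity of the $u^+$ term.
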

\begin{proof}
Firstly, by similar argument, the existence and uniqueness of this energy minimizing problem can be proved.

Now to show the equivalence of these two minimizing problems, we recall that the minimizer $u\geq 0$, so 
$$\min_{v\in H^1_+(\Omega)} E[v]=E[u]=\tilde{E}[u]\geq \min_{v\in \tilde{\mathcal{K}}}\tilde{E}[v];$$

\noindent On the other hand, in order to show 
$$\min_{v\in \mathcal{K}} E[v]\leq \min_{v\in \tilde{\mathcal{K}}}\tilde{E}[v],$$
we note that for any $v\in \tilde{\mathcal{K}}$, the corresponding $v^+\in H^1_+(\Omega)$. As a result,
$$ E[u]\leq E[v^+]\leq\tilde{E}[v]$$
for any $v\in\tilde{\mathcal{K}} $, therefore we get
$$E[u]\leq \min_v \tilde{E}[v].$$
Now since $E[u]=\tilde{E}[u]=\min \tilde{E}[v]$, we have $\tilde{u}=u$ by the uniqueness.

\end{proof}

It remains to derive the Euler-Lagrange equation for this new energy minimizing scheme. 

\begin{prop}
\label{th:minimizer} 
Suppose $u$ is the unique solution to the minimization problem \eqref{eq_elli_mini2}, then $u$ is the (weak) solution to the following modified backward Euler scheme:  
$$\frac{u-u_n\cdot \chi_{\{u>0\}}}{k}=\Delta u-\chi_{\{u>0\}} .$$
\end{prop}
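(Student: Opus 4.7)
My plan is to derive the stated equation as the Euler--Lagrange condition for the minimization problem \eqref{eq_elli_mini2}, splitting $\Omega$ into $\{u>0\}$ and $\{u=0\}$ (which cover $\Omega$ up to a null set since $u\geq 0$ a.e.\ by Lemma~\ref{lem_mini2}) and handling the non-smooth term $\int(1-u_n/k)u^+$ separately on each piece.

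First I would consider test functions $\phi\in C^\infty_c(\Omega)$ whose support lies in the open set $\{u>0\}$. For such $\phi$, one has $u+\epsilon\phi>0$ on $\supp\phi$ for $|\epsilon|$ small, so $(u+\epsilon\phi)^+=u+\epsilon\phi$ there and $\epsilon\mapsto\tilde{E}[u+\epsilon\phi]$ is classically differentiable at $\epsilon=0$. Vanishing of the derivative gives
\[
\int_\Omega \nabla u\cdot\nabla\phi+\frac{u\phi}{k}+(1-u_n/k)\phi\,dx=0,
\]
so $-\Delta u+u/k+(1-u_n/k)=0$ in $\mathcal{D}'(\{u>0\})$. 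Since $\chi_{\{u>0\}}\equiv 1$ on this set, this is precisely the claimed equation pointwise on $\{u>0\}$.

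On the coincidence set $\{u=0\}$, I would use standard elliptic regularity for obstacle-type problems (cf.\ \cite{Monotone_Forcardi,Weiss}) to obtain $u\in H^2(\Omega)$. Stampacchia's theorem then gives $\nabla u=0$ a.e.\ on $\{u=0\}$, and applying Stampacchia once more to each component of $\nabla u\in H^1(\Omega)$ yields $D^2 u=0$, hence $\Delta u=0$, a.e.\ on $\{u=0\}$. Since $u=0$ and $\chi_{\{u>0\}}=0$ on this set as well, the stated equation reduces to the tautology $0=0$ a.e.\ there. Combining the two regions, the equation holds a.e.\ in $\Omega$, which, together with the Neumann condition built into $\tilde{\mathcal{K}}$, is the desired weak form.

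The main obstacle is the one-sided differentiability of $t\mapsto t^+$ at $t=0$: a purely variational argument based on right and left derivatives of $\tilde{E}[u+\epsilon\phi]$ produces only inequalities (not equalities) on $\{u=0\}$, which in general leave $\Delta u$ there undetermined. I would sidestep this by invoking $H^2$-regularity of the obstacle problem, which pins $\Delta u$ to zero on the coincidence set in the a.e.\ sense and thereby completes the identification of $u$ as the weak solution of the modified backward Euler scheme.
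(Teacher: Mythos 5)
Your proposal is correct in substance but takes a genuinely different route from the paper's. The paper (Appendix~\ref{s:prop}, following \cite{Monotone_Forcardi}) works directly with the one-sided variation $\tilde E[u+\varepsilon\phi]\ge\tilde E[u]$ for an arbitrary compactly supported $\phi$, splits the non-smooth term $\int(1-\frac{u_n}{k})[(u+\varepsilon\phi)^+-u]$ over $\{u+\varepsilon\phi\ge0\}$ and $\{u+\varepsilon\phi<0\}$, passes to the limit of the indicator functions, and then bounds the distribution $T(\phi)=\int_\Omega\nabla u\cdot\nabla\phi+\frac1k\int_\Omega u\phi+\int_\Omega\chi_{\{u>0\}}(1-\frac{u_n}{k})\phi$ from both sides by replacing $\phi$ with $-\phi$. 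This identifies $T$ as an absolutely continuous measure whose density vanishes a.e.\ on $\{u>0\}$ and on $\{u=0\}$, hence $T\equiv0$. The payoff of that route is self-containedness: the argument manufactures exactly the regularity it needs ($\Delta u$ an $L^1_{loc}$ function) from the minimality alone. Your route instead localizes --- classical first variation on the open set $\{u>0\}$, and imported $H^2$/$C^{1,1}_{loc}$ regularity plus Stampacchia's lemma on the coincidence set --- which is shorter and more transparent, at the price of using obstacle-problem regularity as a black box.

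Two points need care for your version to close. First, the interior-variation step requires $\{u>0\}$ to be open and $u\ge\delta>0$ on $\supp\phi$, i.e.\ continuity of $u$; in $d\le3$ this does follow from the $H^2$ bound, but that means the regularity must be invoked at the outset, not only when you treat the coincidence set. Second, beware of circularity: the $H^2$/$C^{1,1}_{loc}$ regularity you cite must be obtained from the minimization problem or from the equivalent elliptic variational inequality \eqref{eq_ellivari} (e.g.\ via the penalization argument of \cite{SmoothVariational_Brezis}, which is how the paper's own regularity theorem is proved), and not from the Euler--Lagrange equation you are in the process of establishing. With those provisos the two halves combine as you describe: $\Delta u\in L^2$ upgrades the distributional identity on $\{u>0\}$ to an a.e.\ identity, Stampacchia applied to $u$ and then to each $\partial_i u$ gives $\Delta u=0$ a.e.\ on $\{u=0\}$ where the claimed equation is vacuous, and integration by parts against $v\in\tilde{\mathcal{K}}$ recovers the weak formulation.
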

\noindent The proof of this result is found in Appendix~\ref{s:prop}. 


We follow the idea in \cite{SmoothVariational_Brezis} to formulate the minimization problem \eqref{eq_elli_mini} as a variational inequality:
\begin{equation}\label{eq_ellivari}
u\in H^1_+(\Omega): \int_{\Omega} \nabla u\cdot \nabla(v-u)+\frac{u}{k}(v-u)\ dx\geq \int_{\Omega} \left(\frac{u_n}{k}-1\right)\cdot (v-u)\mbox{for all } v\in H^1_+(\Omega) .
\end{equation}{}
To see the equivalence of the energy minimization and elliptic variational inequality we now state the proposition.

\begin{prop}\label{prop_equiv_vari}
Any solution to the minimization problem \eqref{eq_elli_mini} is also a solution to the variational inequality \eqref{eq_ellivari} and vice versa.
\end{prop}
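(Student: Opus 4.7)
The proposition is a standard convex-analysis equivalence between a minimization problem on a convex set and the associated variational inequality, so the plan is to exploit (i) convexity of the admissible set $H^1_+(\Omega)$ and (ii) convexity (indeed strict convexity) of the functional $E$. I would split the proof into the two implications and treat them symmetrically.

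For the direction ``minimizer $\Rightarrow$ variational inequality'', I would take any competitor $v\in H^1_+(\Omega)$ and form the convex combination $u_t := u + t(v-u)$ for $t\in[0,1]$. Since $H^1_+(\Omega)$ is a convex cone (closed under convex combinations, with the no-flux condition being linear), we have $u_t\in H^1_+(\Omega)$ for all $t\in[0,1]$. Because $u$ minimizes $E$, the real-valued map $\varphi(t):=E[u_t]$ attains its minimum on $[0,1]$ at $t=0$, hence $\varphi'(0^+)\geq 0$. A direct computation gives
\[
\varphi'(0)=\int_\Omega \nabla u\cdot\nabla(v-u)+\frac{1}{k}(u-u_n)(v-u)+(v-u)\,dx,
\]
and rearranging the inequality $\varphi'(0)\geq 0$ yields exactly \eqref{eq_ellivari}.

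For the converse, I would use convexity of $E$ together with the standard first-order characterization: since $E$ is $C^1$ and convex on the convex set $H^1_+(\Omega)$, for every $v\in H^1_+(\Omega)$,
\[
E[v]-E[u]\ \geq\ \langle DE[u],\, v-u\rangle
=\int_\Omega \nabla u\cdot\nabla(v-u)+\tfrac{1}{k}(u-u_n)(v-u)+(v-u)\,dx.
\]
The right-hand side is nonnegative by hypothesis \eqref{eq_ellivari} (after moving the $u_n/k-1$ term back), so $E[v]\geq E[u]$ for every admissible $v$, i.e.\ $u$ is the minimizer.

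I do not expect any serious obstacle. The only things to be careful about are verifying that $H^1_+(\Omega)$ is genuinely closed under convex combinations (so that $u_t$ is admissible for all $t\in[0,1]$, not just one-sided) and justifying differentiation under the integral in $\varphi'(0)$, both of which are routine since $u,v\in H^1(\Omega)$ make all three integrands in $\varphi(t)$ polynomial in $t$. Uniqueness on each side follows from strict convexity of $E$ in the $\|\nabla\cdot\|_{L^2}+\|\cdot\|_{L^2}$ norm, matching the uniqueness already established for the minimization problem, so no additional argument is required.
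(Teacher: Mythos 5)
Your proof is correct and follows essentially the same route as the paper: the forward direction via the one-sided derivative of $\lambda\mapsto E[(1-\lambda)u+\lambda v]$ at $\lambda=0$ is exactly the paper's argument, and your converse via the first-order convexity inequality $E[v]-E[u]\geq \varphi'(0)\geq 0$ fills in what the paper dismisses as ``the reverse can be proved similarly.'' No gaps.
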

\begin{proof}
Suppose $u$ is an energy minimizer to \eqref{eq_elli_mini}. Let $v\in H^1_+(\Omega)$, note that $H^1_+(\Omega)$ is convex then $(1-\lambda) u+\lambda v\in H^1_+(\Omega)$ for any $\lambda\in[0,1]$. Using $(1-\lambda) u+\lambda v$ as a competitor in $E[u]\leq E[(1-\lambda) u+\lambda v]$, we can derive from the order $O(\lambda)$:
\begin{align*}
   \int_{\Omega} \nabla u\cdot \nabla(v-u)+\frac{u}{k}(v-u)\ dx\geq \int_{\Omega} \left(\frac{u_n}{k}-1\right)\cdot (v-u), \ \forall\ v\in H^1_+(\Omega).
\end{align*}
The reverse can be proved similarly.

\end{proof}

Note that this formulation uses convexity of $H^1_+(\Omega)$; the optimal regularity of $u$ is $C^{1,1}_{loc}$:

\begin{theorem}[regularity]
Suppose u is a solution to \eqref{eq_elli_mini} (or \eqref{eq_ellivari}), then there exists a positive constant $C$ such that $$\Vert\Delta u \Vert_\infty\leq C\Big(1+\frac1k\Vert u_n\Vert_\infty+\Vert \Delta u_n\Vert _\infty\Big).$$ 
Moreover, for each compact $K\subset \Omega$ there exists a positive constant $c(K)>0$ such that 
$$\sup_{i,j}\sup_{x\in K} |D_{ij}u(x)|\leq c.$$

\end{theorem}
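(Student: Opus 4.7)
The plan is to obtain the $L^\infty$ bound on $\Delta u$ by combining the pointwise Euler--Lagrange equation from Proposition~\ref{th:minimizer} with a maximum principle comparison for $\phi := u - u_n$, and then to invoke classical interior regularity for the elliptic obstacle problem to pass from $\Delta u \in L^\infty$ to $u \in C^{1,1}_{loc}$.

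First I would extract from Proposition~\ref{th:minimizer} the pointwise (a.e.) identity
\[
\Delta u = \left(1 + \tfrac{u - u_n}{k}\right)\chi_{\{u > 0\}},
\]
the representation on $\{u > 0\}$ being immediate from the E--L equation and the vanishing on $\{u = 0\}$ following, once $u$ is known to lie in $C^{1,1}_{loc}$, from the standard fact that a $C^{1,1}$ function has $D^2 u = 0$ almost everywhere on its level sets (Federer). This reduces bounding $\|\Delta u\|_\infty$ to bounding $\|u - u_n\|_\infty/k$.

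Next I would apply a weak maximum principle to $\phi := u - u_n$. A direct computation shows that on $\{u > 0\}$ the function $\phi$ solves
\[
\phi/k - \Delta \phi = -1 + \Delta u_n,
\]
while on $\{u = 0\}$ we simply have $\phi = -u_n \in [-\|u_n\|_\infty, 0]$. Examining where $\phi$ attains its supremum (resp.\ infimum), the only possibilities are an interior point of $\{u > 0\}$, a point of the coincidence set $\{u = 0\}$, or the Neumann boundary $\partial\Omega$. At an interior extremum of $\phi$ inside $\{u > 0\}$ the PDE gives $\sup\phi \le k\|\Delta u_n\|_\infty$ and $\inf\phi \ge -k(1+\|\Delta u_n\|_\infty)$, while on $\{u = 0\}$ we have $|\phi| \le \|u_n\|_\infty$ and the Neumann condition is handled by Hopf's lemma (since both $u$ and $u_n$ satisfy Neumann data). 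Combining,
\[
\frac{|u - u_n|}{k} \;\leq\; C\Big(1 + \tfrac{1}{k}\|u_n\|_\infty + \|\Delta u_n\|_\infty\Big),
\]
which together with the identity in Step~1 yields the claimed bound on $\|\Delta u\|_\infty$.

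Finally, to upgrade from $\Delta u \in L^\infty$ to $u \in C^{1,1}_{loc}$, I would invoke the classical interior regularity theorem for the obstacle problem (Frehse--Caffarelli): combined with the constraint $u \geq 0$ and the variational inequality~\eqref{eq_ellivari}, an $L^\infty$ bound on $\Delta u$ forces $u \in C^{1,1}_{loc}(\Omega)$ with the pointwise bound $\sup_{i,j}\sup_{x\in K}|D_{ij}u(x)|\leq c(K)$. The main obstacle will be Step~2: carrying out the maximum principle rigorously on the a priori geometrically uncontrolled open set $\{u>0\}$ requires enough regularity of $u$ (at least $W^{2,p}$ for $p>d$) to make pointwise arguments valid. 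I would address this via a preliminary penalized approximation, solving $-\Delta u_\epsilon + u_\epsilon/k + 1 - u_n/k = \beta_\epsilon(u_\epsilon)$ for a smoothed indicator $\beta_\epsilon$ of $\{u_\epsilon \leq 0\}$, carrying out the comparison argument uniformly in $\epsilon$ (where the $\Delta u_n$ term enters through differentiation of the source), and then passing $\epsilon \to 0$ using uniqueness in Proposition~\ref{prop_equiv_vari}.
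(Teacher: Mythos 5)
Your proposal is essentially a fleshed-out reconstruction of the argument the paper only cites: the paper offers no proof of its own, deferring entirely to the penalty and non-degeneracy arguments of \cite{SmoothVariational_Brezis} (Lemma 1.2). What you add, and what the paper does not spell out, is a concrete derivation of the explicit constant. Your central reduction --- using the Euler--Lagrange identity of Proposition~\ref{th:minimizer} to trade the bound on $\Vert\Delta u\Vert_\infty$ for a bound on $\Vert u-u_n\Vert_\infty/k$, and then running a maximum-principle comparison for $\phi=u-u_n$, which satisfies $\phi/k-\Delta\phi=-1+\Delta u_n$ on $\{u>0\}$ and equals $-u_n\in[-\Vert u_n\Vert_\infty,0]$ on the coincidence set --- is correct and yields exactly the three terms $1+\frac1k\Vert u_n\Vert_\infty+\Vert\Delta u_n\Vert_\infty$ in the statement; the passage from $\Delta u\in L^\infty$ to the interior $C^{1,1}$ bound is indeed the classical Frehse/Caffarelli obstacle-problem regularity, which is what the cited non-degeneracy argument delivers. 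Two repairs are needed. First, Step 1 as written is circular: you invoke $u\in C^{1,1}_{loc}$ (via Federer) to conclude $\Delta u=0$ a.e.\ on $\{u=0\}$ before that regularity is available. This detour is unnecessary, since Proposition~\ref{th:minimizer} already gives the distributional identity $\Delta u=\chi_{\{u>0\}}\bigl(1+\frac{u-u_n}{k}\bigr)$ with an $L^\infty$ right-hand side once $u\in L^\infty$, so $\Vert\Delta u\Vert_\infty\leq 1+\frac1k\Vert u-u_n\Vert_\infty$ follows directly and $u\in W^{2,p}_{loc}$ for every $p<\infty$ by Calder\'on--Zygmund. Second, the pointwise second-derivative test at an extremum inside the a priori irregular open set $\{u>0\}$ does require the penalized approximation you sketch at the end; that device is precisely the one in \cite{SmoothVariational_Brezis}, and to close it you must also supply a uniform-in-$\epsilon$ bound on $\Vert u_\epsilon\Vert_\infty$ (obtained the same way, evaluating the penalized equation at an interior extremum and using Hopf's lemma at the Neumann boundary). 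With these adjustments your proof is complete and is more self-contained than the paper's.
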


\noindent The proof follows
from \cite{SmoothVariational_Brezis}, where penalty argument is applied together with a non-degeneracy argument, which we refer to Lemma 1.2 from \cite{SmoothVariational_Brezis}. 

\begin{remark}
This upper bound can be improved by applying energy gradient flow. By competing $u$ with $u_n$ in $E[u]\leq E[u_n]$, we have
$$\frac1k\Vert u-u_n \Vert_2^2\leq \Vert \nabla u_n\Vert_2^2+\Vert u_n\Vert_1.$$
By applying the penalty argument in \cite{SmoothVariational_Brezis},
we can derive that
$$\Vert\Delta u\Vert_2\leq C(\Vert u_n\Vert_{H^2}+1).  $$

\end{remark}

Now we will show the energy minimization scheme has a limit, as the time step $k \rightarrow 0$, that solves the parabolic variational inequality (\ref{eq_paravari}). We study the energy minimization scheme as in previous sections and by Proposition \ref{prop_equiv_vari}, it suffices to show the following lemma.

\begin{lemma}[Rothe's method]\label{lem_RoMet}
Recall that $k$ is the small time step and suppose for each $j=1,\cdots M$, where $M=T/k$, $u_j$ is the unique minimizer of $E_j(u)$  in $H^1_+(\Omega)$. Here $E_j(u)$ are defined similarly as in Theorem \ref{thm_mini}:
\begin{align*}
E_j(u)\coloneqq  \int_{\Omega} \frac12 |\nabla u|^2+\frac{1}{2k} (u-u_{j-1})^2+u.
\end{align*}
 Then $u=\lim_{k \rightarrow 0} u_M(x,t)$ exists and solves the parabolic variational inequality \eqref{eq_paravari}. Here $u_M$ is the associated linear interpolation defined by
 \begin{equation*} 
     u_{M}(x,t)\coloneqq(1-\theta)\cdot u_j(x)+\theta \cdot u_{j+1}(x)\ ,\ \mbox{for}\ 
     t = (j+\theta) k,\ \theta \in[0,1).
 \end{equation*} 
\end{lemma}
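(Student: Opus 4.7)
The plan is to use the classical Rothe method: derive $k$-uniform a priori bounds on the minimizers $\{u_j\}$, extract a convergent subsequence via Aubin--Lions, and then pass to the limit in the discrete elliptic variational inequality of Proposition~\ref{prop_equiv_vari}. Since the uniqueness result established at the start of Section~\ref{s:variational} applies to \eqref{eq_paravari}, once any limit point is identified as a solution the full family $u_M$ will converge to it.

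For the a priori bounds I would test $E_j(u_j)\leq E_j(u_{j-1})$, which yields the dissipation identity
\[
\cE^j + \frac{1}{2k}\|u_j-u_{j-1}\|_{L^2(\Omega)}^2 \leq \cE^{j-1},\qquad \cE^j := \int_\Omega \tfrac12 |\nabla u_j|^2 + u_j,
\]
and summing in $j$ produces, uniformly in $k$, $\sup_j\|u_j\|_{H^1(\Omega)}\lesssim 1$ together with $\sum_{j=1}^M \frac{1}{k}\|u_j-u_{j-1}\|_{L^2(\Omega)}^2\lesssim 1$. Introducing also the piecewise-constant interpolant $\bar u_M(t):=u_j$ for $t\in((j-1)k,jk]$, these estimates imply $u_M,\bar u_M\in L^\infty(0,T;H^1(\Omega))\cap\mathcal{J}$, $\partial_t u_M\in L^2(0,T;L^2(\Omega))$ uniformly in $k$, and $\|\bar u_M-u_M\|_{L^2(0,T;L^2(\Omega))}^2\lesssim k$. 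Aubin--Lions then delivers a subsequence along which $u_M\to u$ strongly in $L^2(0,T;L^2(\Omega))$ (so $\bar u_M\to u$ as well), while $\nabla\bar u_M\rightharpoonup \nabla u$ and $\partial_t u_M\rightharpoonup \partial_t u$ weakly in $L^2(0,T;L^2(\Omega))$; nonnegativity is preserved under a.e.\ pointwise convergence, so $u\in\mathcal{J}$.

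Next I would integrate the discrete variational inequality of Proposition~\ref{prop_equiv_vari} tested against an arbitrary $v\in\mathcal{J}$ over $(0,t)$ with $t=Jk$ to obtain
\[
\int_0^t\!\int_\Omega \partial_t u_M(v-\bar u_M) + \int_0^t\!\int_\Omega \nabla\bar u_M\cdot\nabla(v-\bar u_M) \geq \int_0^t\!\int_\Omega (\bar u_M-v).
\]
All terms linear in $\bar u_M$, $\nabla\bar u_M$ or $\partial_t u_M$ (including $\int \partial_t u_M\,\bar u_M \to \int \partial_t u\,u$, which uses that $\bar u_M$ converges strongly) pass to the limit by weak-strong pairings. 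The hard part will be the quadratic term $-\int_0^t\!\int_\Omega|\nabla\bar u_M|^2$, where weak convergence alone is insufficient. Here weak lower semicontinuity of the Dirichlet energy provides $\limsup\bigl(-\int_0^t\!\int_\Omega|\nabla\bar u_M|^2\bigr)\leq -\int_0^t\!\int_\Omega|\nabla u|^2$, precisely the direction compatible with the inequality sign. Taking $\limsup$ of the whole discrete inequality therefore reproduces \eqref{eq_paravari} at grid times $t=Jk$; the regularity $u\in C([0,T];L^2(\Omega))$ coming from $\partial_t u\in L^2(0,T;L^2(\Omega))$ extends the inequality to almost every $t\in(0,T)$, and uniqueness upgrades subsequential convergence to convergence of the full family $u_M$.
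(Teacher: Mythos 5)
Your proof is correct, but it reaches the key a priori estimate by a different route than the paper. The paper tests the $j$-th elliptic variational inequality with $v=u_{j-1}$ and the $(j-1)$-th with $v=u_j$, adds the two, and obtains the monotonicity $\bigl\|\tfrac{u_j-u_{j-1}}{k}\bigr\|_{L^2}\leq\bigl\|\tfrac{u_{j-1}-u_{j-2}}{k}\bigr\|_{L^2}$, hence a \emph{uniform-in-time} bound on the discrete velocity anchored at $j=1$ by a constant involving $\|\Delta u_0\|_{L^2}$; compactness then comes from Arzel\`a--Ascoli in $C([0,T];L^2(\Omega))$. You instead telescope the energy dissipation $E_j(u_j)\leq E_j(u_{j-1})$, which yields only the square-summable bound $\sum_j k\,\bigl\|\tfrac{u_j-u_{j-1}}{k}\bigr\|_{L^2}^2\lesssim 1$, and you compensate with Aubin--Lions. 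The trade-off: the paper's estimate is stronger ($\partial_t u\in L^\infty(0,T;L^2)$) but implicitly requires $u_0\in H^2$, whereas yours is the standard minimizing-movements bound needing only finite initial energy, and it still suffices for the limit passage. On the limit passage itself you are actually more careful than the paper: you isolate the quadratic term $-\int|\nabla\bar u_M|^2$ and invoke weak lower semicontinuity of the Dirichlet energy, checking that the resulting one-sided inequality points in the direction compatible with \eqref{eq_paravari}, and you note that the pairing $\int\partial_t u_M\,\bar u_M$ closes by weak--strong convergence; the paper states the conclusion of this step without spelling out either point. Both arguments then appeal to the uniqueness proposition for \eqref{eq_paravari} to upgrade subsequential to full convergence.
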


\begin{proof}[Proof of Lemma \ref{lem_RoMet}]
Our proof follows \cite{Kacur_RotheMethod,Rudd_Sch_vari}. First, note that $u_j$ is the unique minimizer of $E_j$ and as discussed earlier in Proposition \ref{prop_equiv_vari}, it satisfies the elliptic variational inequality \eqref{eq_elli_mini}:
\begin{equation}\label{eq_elli_vara2}
\int_\Omega \na u_j\cdot \na(v-u_j)+\frac{u_j}{k}(v-u_j)\ dx\geq \int_\Omega \left(\frac{u_{j-1}}{k}-1\right)\cdot (v-u_j)\mbox{for all } v\in H^1_+(\Omega) .
\end{equation}
Taking $v=u_{j-1}$, one can derive 
\begin{equation*}
    \langle \na u_j , \na(u_{j-1}-u_j)\rangle+\frac1k\langle u_j,u_{j-1}-u_j\rangle \geq \frac1k\langle u_{j-1} ,u_{j-1}-u_j\rangle-\langle 1, u_{j-1}-u_j\rangle.
\end{equation*}
Similarly we take $v=u_j$ in the $j-1$-th inequality:
\begin{equation*}
    \langle \na u_{j-1} ,\na( u_{j}-u_{j-1})\rangle+\frac1k\langle u_{j-1},u_{j}-u_{j-1}\rangle \geq \frac1k\langle u_{j-2},u_{j}-u_{j-1}\rangle-\langle 1, u_j-u_{j-1}\rangle.
\end{equation*}
Adding the two inequalities above it follows that
\begin{equation*}
    \frac{1}{k}\Vert u_j-u_{j-1}\Vert_2^2+\Vert \na(u_j-u_{j-1})\Vert_2^2\leq \frac1k \langle u_j-u_{j-1} , u_{j-1}-u_{j-2}\rangle.
\end{equation*}
Note that when $j=1$, we choose $v=u_0$ and hence
\begin{equation*}
    \frac{1}{k}\Vert u_1-u_0\Vert_2^2+\Vert \na(u_1-u_0)\Vert_2^2\leq \left|\langle \na u_0, \na(u_1-u_0)\rangle\right|+\left|\langle1, u_0-u_1\rangle\right|\leq \left(\Vert \Delta u_0\Vert_2+1\right)\cdot \Vert u_1-u_0\Vert_2.
\end{equation*}
Therefore we obtain that
\begin{equation*}
    \Vert \frac{u_j-u_{j-1}}{k}\Vert_2 \leq C
\end{equation*}
for any $j=1,\cdots, M$ and a positive absolute constant $C$. Note that $\dot{u_M}(t)=\frac{u_{j+1}-u_{j}}{k}$, therefore by Arzel\`a-Ascoli Theorem, $u_M(t)$ converges to some function $u$ in $C([0,T],L^2(\Omega))$. Then we can define 
$$\widetilde{u_M}(t)=u_j\ ,\ \mbox{for}\ t\in[jk,(j+1)k),$$
similar to Lemma \ref{lem_gaconv} and Remark \ref{rmk_unifconv}, $\widetilde{u_M}$ converges to the same $u$. Indeed, $\na u_M$ converges to $\na u$ weakly in $L^2((0,T),L^2(\Omega))$. As a result we can rewrite \eqref{eq_elli_vara2} as follows: for any $v\in H^1_+(\Omega)$, we have
\begin{equation}
    \langle \dot{u_M}(t), v(t)-\widetilde{u_M}(t)\rangle+\langle \na\widetilde{u_M},\na(v-\widetilde{u_M})\rangle\geq -\langle 1, v-\widetilde{u_M}\rangle,
\end{equation}
for a.e. $t\in(0,T)$. It then implies that for arbitrary $\tau_1< \tau_2$ in $[0,T]$,
\begin{equation}
    \int_{\tau_1}^{\tau_2}\langle \dot{u_M}(t), v(t)-\widetilde{u_M}(t)\rangle+\langle \na\widetilde{u_M},\na(v-\widetilde{u_M})\rangle\ dt\geq -\int_{\tau_1}^{\tau_2}\langle 1, v-\widetilde{u_M}\rangle\ dt,
\end{equation}
letting $k \to 0$, we derive the desired result
\begin{equation}
    \int_{\tau_1}^{\tau_2}\langle \dot{u}(t), v(t)-u(t)\rangle+\langle \na u,\na(v-u)\rangle\ dt\geq -\int_{\tau_1}^{\tau_2}\langle 1, v-u\rangle\ dt,
\end{equation}
for almost every $\tau_1<\tau_2$ in $[0,T]$.

\end{proof}

\subsubsection{A regularized formulation}
\label{s:regular} 

We introduce a formulation using a regularization method with parameter $\epsilon$ proposed first in \cite{rogers}. Here, we will see the convergence in regularized solutions $u_\epsilon (x,t)$ as $\epsilon \to 0$ to the other OD formulations. Its analysis is simplified since the approximating problems avoid handling the free interfaces directly. 
We include this approach for completeness. While there is theoretical insight to be gained from this formulation, it is unattractive for numerical approximation for application purposes as free interface locations are not easily identified from $\epsilon >0$ results. 
\begin{equation}\label{eq_epi_appro}
    \partial_tu_{\epsilon }=\Delta u_{\epsilon }-f_\epsilon(u_\epsilon),
\end{equation}
where 
\begin{equation}
    f_\epsilon(u_\epsilon)=\left\{
    \begin{aligned}
    1&& u_\epsilon> \epsilon  \\
\frac {u_\epsilon}{\epsilon}&& u_\epsilon\leq \epsilon ,
    \end{aligned}\right.
\end{equation}
with same initial condition $u_0(x)$. Note that $f_\epsilon(x)$ is a Lipschitz function and as a result $u_\epsilon$ exists as a smooth solution for each $\epsilon>0$ with $u_\epsilon(x,t) >0$ for all $x\in \Omega$ and $t>0$. 

We consider $u_{\epsilon_1}$ and $u_{\epsilon_2}$ with $\epsilon_1<\epsilon_2$. Denote their difference by $w=u_{\epsilon_1}-u_{\epsilon_2}$, then
\begin{align*}
    \partial_t w-\Delta w=-f_{\epsilon_1}(u_{\epsilon_1})+f_{\epsilon_2}(u_{\epsilon_2}).
\end{align*}
Note that,
\begin{equation}
  -f_{\epsilon_1}(u_{\epsilon_1})+f_{\epsilon_2}(u_{\epsilon_2})=  \left\{
  \begin{aligned}
  0\ ,\ &&\mbox{if}\ u_{\epsilon_1}>\epsilon_1\ ,\ u_{\epsilon_2}>\epsilon_2\\
  -\frac{u_{\epsilon_1}}{\epsilon_1}+1\ ,\ &&\mbox{if}\ u_{\epsilon_1}\leq \epsilon_1\ ,\ u_{\epsilon_2}>\epsilon_2\\
  -1+\frac{u_{\epsilon_2}}{\epsilon_2}\ ,\ &&\mbox{if}\ u_{\epsilon_1}> \epsilon_1\ ,\ u_{\epsilon_2}\leq\epsilon_2\\
  -\frac{u_{\epsilon_1}}{\epsilon_1}+\frac{u_{\epsilon_2}}{\epsilon_2}\ ,\ &&\mbox{if}\ u_{\epsilon_1}\leq \epsilon_1\ ,\ u_{\epsilon_2}\leq\epsilon_2
  \end{aligned}
  \right. .
\end{equation}
We observe that
\begin{align*}
    -1+\frac{u_{\epsilon_2}}{\epsilon_2}\leq 0 \ ,\ &&\mbox{if}\ u_{\epsilon_1}> \epsilon_1\ ,\ u_{\epsilon_2}\leq\epsilon_2\\
    -\frac{u_{\epsilon_1}}{\epsilon_1}+\frac{u_{\epsilon_2}}{\epsilon_2}=-\frac{w}{\epsilon_1}+u_{\epsilon_2}\cdot\left(\frac{1}{\epsilon_2}-\frac{1}{\epsilon_1}\right)\ ,\ &&\mbox{if}\ u_{\epsilon_1}\leq \epsilon_1\ ,\ u_{\epsilon_2}\leq\epsilon_2\\
    u_{\epsilon_1}<u_{\epsilon_2}\ ,\ &&\mbox{if}\ u_{\epsilon_1}\leq \epsilon_1\ ,\ u_{\epsilon_2}>\epsilon_2;
\end{align*}
so if we assume the maximal value of $w$ is achieved at $(x_0,t_0)$ with $x_0\in\Omega$ and $t_0>0$, then $w(x_0,t_0)>0$, $\partial_tw(x_0,t_0)=0$ and $\Delta w(x_0,t_0)\le 0$. Then the standard maximum principle gives a partial result of the following statement:
\begin{theorem}
\label{th:epsilon} 
Suppose a sequence of classical functions $\{u_{\epsilon}\}$ solve \eqref{eq_epi_appro}, then $u_{\epsilon}$ is monotonically decreasing as $\epsilon $  decreases to 0. Moreover, the limiting function $$\lim_{\epsilon\to0}u_{\epsilon}=u$$
holds pointwisely. This limiting function $u$ solves the variational inequality \eqref{eq_paravari}.
\end{theorem}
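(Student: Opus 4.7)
The plan is to establish the three claims in turn: monotonicity of the family $\{u_\epsilon\}$, existence of its pointwise limit $u$, and the fact that this $u$ solves \eqref{eq_paravari}. For monotonicity I would complete the case analysis already begun in the excerpt. Let $w = u_{\epsilon_1} - u_{\epsilon_2}$ with $\epsilon_1 < \epsilon_2$ and suppose for contradiction that $w$ attains a strictly positive maximum $M$. Case 2 in the excerpt is excluded immediately because it forces $w < 0$; in Cases 3 and 4 the source term $-f_{\epsilon_1}(u_{\epsilon_1})+f_{\epsilon_2}(u_{\epsilon_2})$ is non-positive; in Case 1 it vanishes and $w$ satisfies the homogeneous heat equation locally. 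The strong parabolic maximum principle, together with Hopf's lemma to handle the Neumann boundary, then forces $w$ to attain $M$ already at $t=0$, contradicting the common initial data $w(\cdot,0)\equiv 0$. Hence $u_{\epsilon_1}\le u_{\epsilon_2}$ pointwise, and the monotone family has a pointwise limit $u:=\lim_{\epsilon\to 0^+} u_\epsilon\ge 0$; the non-negativity follows from a parabolic minimum principle, since $u_0\in H^1_+(\Omega)$ and the reaction term $-f_\epsilon(u_\epsilon)$ is non-positive wherever $u_\epsilon\ge 0$.

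To prepare for the passage to the limit in the equation, I would extract uniform estimates for $u_\epsilon$. Testing \eqref{eq_epi_appro} against $u_\epsilon$ and against $\partial_t u_\epsilon$ and using $f_\epsilon(s)\,s\ge 0$ yields uniform bounds in $L^\infty(0,T;L^2)\cap L^2(0,T;H^1)$ together with $\partial_t u_\epsilon$ uniformly bounded in $L^2(0,T;L^2)$. Weak compactness combined with the Aubin--Lions lemma upgrade the pointwise convergence to $u_\epsilon \rightharpoonup u$ weakly in $L^2(0,T;H^1)$, $\partial_t u_\epsilon \rightharpoonup \partial_t u$ weakly in $L^2(0,T;L^2)$, and $u_\epsilon\to u$ strongly in $L^2(0,T;L^2)$; in particular $u\in\mathcal{J}$.

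Finally, to obtain the variational inequality I would test \eqref{eq_epi_appro} against $v-u_\epsilon$ for an arbitrary $v\in\mathcal{J}$ and integrate by parts using the Neumann condition to get
\begin{equation*}
\int_0^t\!\!\int_\Omega \partial_\tau u_\epsilon (v-u_\epsilon) + \int_0^t\!\!\int_\Omega \nabla u_\epsilon\cdot\nabla(v-u_\epsilon) - \int_0^t\!\!\int_\Omega (u_\epsilon - v) = \int_0^t\!\!\int_\Omega (1-f_\epsilon(u_\epsilon))(v-u_\epsilon).
\end{equation*}
The right-hand side is supported on the set $\{u_\epsilon\le\epsilon\}$, where $1-f_\epsilon(u_\epsilon)=1-u_\epsilon/\epsilon\in[0,1]$ and $v-u_\epsilon\ge -\epsilon$ because $v\ge 0$; hence it is bounded below by $-\epsilon|\Omega|T$ and vanishes as $\epsilon\to 0$. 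The main technical hurdle is passing to the limit on the left-hand side while preserving the one-sided inequality. For this I would rewrite $\int\!\!\int\nabla u_\epsilon\cdot\nabla(v-u_\epsilon) = \int\!\!\int\nabla u_\epsilon\cdot\nabla v - \int\!\!\int|\nabla u_\epsilon|^2$ and $\int\!\!\int\partial_\tau u_\epsilon\, u_\epsilon = \tfrac12\|u_\epsilon(t)\|_{L^2}^2 - \tfrac12\|u_0\|_{L^2}^2$, then invoke weak lower semi-continuity of the $L^2$ norms so that the quadratic terms are controlled by their analogues for $u$ with a sign favorable to the inequality. Combined with the strong and weak convergences above, this yields \eqref{eq_paravari} for $u$, and the uniqueness result established earlier in the excerpt identifies $u$ as the unique variational solution of the OD problem.
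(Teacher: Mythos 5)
Your proposal is correct and reaches the stated conclusion, but it takes a genuinely different route from the paper's proof in two places. For the uniform a priori bounds you test \eqref{eq_epi_appro} against $u_\epsilon$ and against $\partial_t u_\epsilon$ (using $f_\epsilon(s)s\ge 0$ and $|f_\epsilon|\le 1$) to get $\epsilon$-independent bounds in $L^\infty(0,T;L^2)\cap L^2(0,T;H^1)$ plus $\partial_t u_\epsilon$ bounded in $L^2(0,T;L^2)$, and then invoke Aubin--Lions; the paper instead writes $u_\epsilon$ in mild (Duhamel) form and bounds $\Vert \nabla u_\epsilon\Vert_2$ through heat-kernel estimates on the Fourier side (with an explicit error-function computation and a splitting in $\gamma=t-s$), obtaining only the $H^1$ bound and deducing weak convergence of $\partial_t u_\epsilon$ from the equation. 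Your energy route is shorter and yields the time-derivative compactness for free; its only cost is that testing with $\partial_t u_\epsilon$ for merely Lipschitz $f_\epsilon$ needs a standard Galerkin or Steklov-averaging justification, which is exactly the technical point the paper's mild-formulation detour is designed to sidestep. In the passage to the limit you keep $v-u_\epsilon$ as the test function, isolate the residual $\int_0^t\int_\Omega(1-f_\epsilon(u_\epsilon))(v-u_\epsilon)\ge -\epsilon|\Omega|T$ supported on $\{u_\epsilon\le\epsilon\}$, and close with weak lower semicontinuity of the two quadratic terms; the paper tests against $v-u$, identifies $\lim_\epsilon f_\epsilon(u_\epsilon)=\chi_{\{u>0\}}$, and then uses $v\ge 0$ on $\{u=0\}$. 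Your variant is arguably cleaner, since it never requires identifying the limit of $f_\epsilon(u_\epsilon)$ on the coincidence set $\{u=0\}$, where that identification is delicate. The monotonicity step is the same maximum-principle case analysis as in the paper. One small slip to fix: to conclude $u_\epsilon\ge 0$ you should argue that the reaction term $-f_\epsilon(u_\epsilon)$ is non-negative wherever $u_\epsilon\le 0$ (so a negative interior minimum is pushed upward), not that it is ``non-positive wherever $u_\epsilon\ge 0$''.
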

The details of the proof can be found in Appendix~\ref{s:eproof}. An alternate convergence statement and proof is given in Appendix~\ref{s:AugL}.

\subsection{Conjecture on the general dynamics in 1D} 

We make the following plausible conjecture for the general dynamics (including topological changes) of the Cauchy problem in 1D with initial conditions $u_0(x) \in H^1_+(\Omega)$ with compact support. Here, we consider the problem for all space rather than half space with a no flux condition at $x=0$.  
\begin{conjecture} 
\label{con:S}
Assume $u_0$ has a finite ${\cal S}(0)$ where 
${\cal S}(t)$ counts the number of free boundary points:
\[
{\cal S}(t) = \left\{ x : u(x,t) = 0 \mbox{\ and $u(y,t)> 0$ for some $y$ in every neighbourhood of $x$} \right\}.
\]  
Then 
\begin{description}
\item[(i)] ${\cal S} (t)$ is finite for every $t>0$.
\item[(ii)] There exists a finite increasing sequence of times $t_j$, $j= 0, \ldots ,M$ with $t_0 = 0$ and  $\operatorname{card } {\cal S} (t):=n_j$ constant on every interval $(t_j,t_{j+1})$ and $u \equiv 0$ for $t \geq t_M$. 
\item[(iii)] ${\cal S} (t) = \{ s_1(t), s_2(t), \ldots s_{n_j} (t) \}$ for $s_l(t)$ smooth on $(t_j,t_{j+1})$.
\item[(iii)] $u(x,t)$ is $C^1$ for $t > 0$ and $C^\infty$ except at free boundary points. 
\end{description}
\end{conjecture}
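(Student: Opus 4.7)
The conjecture naturally splits into four assertions: $C^\infty$ interior regularity of $u$ on the open set $\{u>0\}$ and $C^1$ regularity across free-boundary points; pointwise finiteness of $\mathcal{S}(t)$ for every $t>0$; a finite partition $0=t_0<t_1<\cdots<t_M$ with the component count constant on each subinterval and $u \equiv 0$ beyond $t_M$; and smooth evolution of the individual free boundaries $s_l(t)$ between topology changes. My plan is to reduce each to classical tools: parabolic obstacle-problem regularity for the first, the Sturmian zero-counting principle for one-dimensional linear parabolic equations for the second and third, and the classical Stefan regularity for the fourth.

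For the regularity assertion: on $\{u>0\}$, $u$ solves $u_t = u_{xx}-1$ with smooth coefficients, so linear parabolic Schauder theory yields interior $C^\infty$. The $C^1$ match across free boundaries comes from passing to the limit in the Rothe scheme of Lemma~\ref{lem_RoMet} and invoking the $C^{1,1}_{\mathrm{loc}}$ obstacle regularity of each time step cited after Proposition~\ref{th:minimizer}: at a free boundary, $u_x$ is continuous (both limits equal $0$) while $u_{xx}$ jumps from $1$ on the positive side to $0$ on the zero side, exactly as required by the obstacle condition.

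For the topology count: differentiating $u_t = u_{xx}-1$ in $x$ shows that $w \coloneqq u_x$ satisfies the homogeneous heat equation $w_t = w_{xx}$ on each component of $\{u>0\}$, with $w = 0$ at every free-boundary point. The parabolic Sturmian theorem (Sturm, Matano, Angenent) guarantees that the number of zeros of $w(\cdot,t)$, counted globally after glueing the identically-zero extension on $\{u=0\}$, is non-increasing in $t$ and that each zero has positive lifetime. Since every component of $\{u(\cdot,t)>0\}$ contains at least one interior local maximum of $u$, hence a zero of $w$, this bounds the number of components pointwise by the initial count; topological events --- splits, merges, and extinctions --- correspond to coalescences or departures of zeros of $w$, of which only finitely many can occur on any compact time interval. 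On each open subinterval $(t_j,t_{j+1})$, each free boundary bounds a single positivity component on which $v = u_t$ satisfies the classical Stefan problem of Section~\ref{section_od}; the contraction argument there produces $s_l \in C^1$ and a standard Schauder bootstrap upgrades this to $s_l \in C^\infty$. Finite extinction follows from the mass balance $\tfrac{d}{dt}\int u = -|\{u>0\}|$ combined with a comparison against a quasi-stationary quadratic template, yielding a Bernoulli-type ODE $\dot M \lesssim -M^{1/3}$ that integrates to zero in finite time.

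The principal obstacle is making the finite-partition assertion quantitatively rigorous: Sturm yields a qualitative non-increase of the zero count, but ruling out accumulation of topological-change times requires a uniform lower bound on $t_{j+1}-t_j$. My approach would combine a parabolic Harnack inequality for $w$ on small positivity components with the uniform $C^{1,1}$ obstacle bound to exclude tangential touchings of distinct components from accumulating in time --- this non-accumulation step is, I believe, the true analytical content of the conjecture and the source of its difficulty.
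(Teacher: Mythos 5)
There is no proof in the paper to compare against: this statement is explicitly a \emph{conjecture}, listed again in the Summary as an open problem, and the authors note only that related results exist for the Stefan problem \cite{figalli} and that ``the reaction term that allows the formation of new zones of constraint'' is the obstruction. So your text should be judged as a research program, not checked against an official argument --- and as a program it contains genuine gaps that you partly acknowledge. The central one is the Sturmian step. The function $w = u_x$ does not satisfy a single linear parabolic equation on a fixed domain: extending $u$ by zero, the equation is $u_t = u_{xx} - \chi_{\{u>0\}}$, so $w_t = w_{xx} - \partial_x \chi_{\{u>0\}}$ carries a signed measure supported on the (unknown, moving) free boundary, and $w$ vanishes identically on the open set $\{u=0\}$. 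The classical zero-counting theorems (Sturm, Matano, Angenent) require a genuinely parabolic equation with locally bounded coefficients and conclude that the zero set is discrete; neither hypothesis nor conclusion holds here. In particular, the nucleation of a new interior constraint zone (a positivity interval splitting in two) \emph{increases} both the number of components and the number of free boundary points, so any naive monotone-count argument must fail; your fallback --- bounding components by the number of critical points of $u_0$ --- is an interesting idea but would need a version of the Sturm principle valid across the free boundary, which you have not supplied.

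Two further points. First, the finite-extinction argument needs the lower bound $|\{u>0\}| \gtrsim (\int u)^{1/3}$, which requires a $C^{1,1}$ spatial bound \emph{uniform in time} (so that $u \lesssim \operatorname{dist}(x,\partial\{u>0\})^2$); the per-time-step bound quoted after Proposition~\ref{th:minimizer} degrades with $\|\Delta u_n\|_\infty$ and does not obviously telescope to a uniform constant. Second, as you say yourself, nothing in the sketch rules out accumulation of topological-change times, and that non-accumulation is precisely the content of item (ii). Identifying that as ``the true analytical content of the conjecture'' is accurate, but it means the proposal stops where the conjecture begins.
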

Recent related results have been shown for the Stefan problem \cite{figalli}. Similar analysis of the OD problem is complicated by the reaction term that allows the formation of new zones of constraint ($u \equiv 0$).

\section{Numerical Approximation} 
\label{s:numerical} 

We consider two numerical methods. The first, suitable for 1D dynamics without topological change, is based on the mapped domain formulation described in Section~\ref{s:yproblem}. The second, suitable for dynamics in higher dimensions including topological change, is based on our new gradient formulation described in Section~\ref{s:gradient}. We prove convergence of this scheme. 

\subsection{Mapped domain ($Y$ formulation) method} 
\label{s:ymethod} 

We consider the discretization of the mapped domain formulation (\ref{eq_yproblem}) in space using cell centred finite differences. We first discretize in space, leaving time continuous (known as a Method of Lines -- MoL -- discretization) with approximations $u^j(t) \approx u((j-1/2)h,t)$, $j = 1 \ldots N$ where $h$ is the uniform grid spacing with $N$ subintervals of $y \in [0,1]$. The interface location $s(t)$ is approximated by $S(t)$. 

\begin{figure}
\centerline{
\includegraphics[width=10cm]{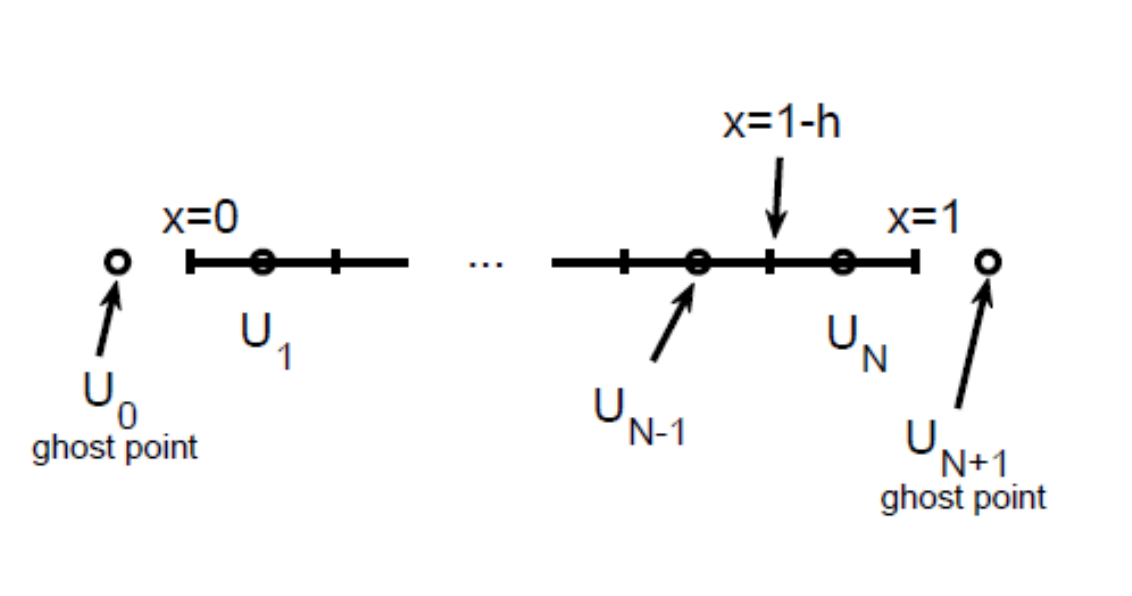}}
\caption{\label{f:ghost} Cell centered finite difference spatial approximation of the 1D mapped domain formulation}  
\end{figure}

Boundary conditions are implemented using ghost points \cite{ghost} $u^0(t) \approx u(-h/2,t)$ and $u^{N+1}(t) \approx u(1+h/2,t)$ depicted in Figure~\ref{f:ghost}. 
Boundary conditions at $y=1$ are implemented using second order averages and differences:
\begin{eqnarray}
(u^{N+1} + u^N)/2 & = & 0 \label{eq:bc1} \\
(u^{N+1} -u^N)/h & = 0 \label{eq:bc2}
\end{eqnarray}
which implies that $u^{N} = u^{N+1} = 0$. The no-flux boundary condition at $y=0$ is approximated similarly. The MoL discretization for the interior equations is 
\begin{equation}
\label{eq:interior}
D_2 u^j + S \dot{S} y D_1 u^j -S^2 \dot{u}^j - S^2  = 0. 
\end{equation} 
where $D_2$ and $D_1$ are the standard centered second order finite difference operators. 
The system (\ref{eq:bc1},\ref{eq:bc2},\ref{eq:interior}) is a Differential Algebraic Equation (DAE) \cite{DAE} and has index one. For computational results, we use Implicit (Backward) Euler time stepping with Newton iterations for the resulting nonlinear system at each time step.  In a computational study, we observe errors of size $O(h^2)+ O(k)$ where $k$ is the time step, as expected for a second order spatial and first order temporal discretization. 

\begin{remark}
\label{rem:ymethod} The convergence of the method has not been proved. The missing direct analysis discussed in Remark~\ref{rem:yproblem} could give insight. 
\end{remark}

\subsubsection{Computational results}

Examples of the dynamics computed with the DAE formulation in the mapped region are shown in Figure~\ref{f:ODex1}. The left figure shows the solution with initial conditions $u_0(x) = (1-x)^2/2$ for $x \in [0,1]$ considered often in the literature. It is the steady state of the problem forced with flux condition $u_x = -1$ at $x=0$ \cite{Crank}. In this solution, $s(t)$ moves monotonically to the left. The solution in this formulation ends when $s(T)=0$ ($u \equiv 0$). A specialized method in this general framework was developed in \cite{oxygen} to accurately compute both the solution and the end time $T$ of the dynamics. Our mapped formulation breaks down as $t \rightarrow T$. The right computation of Figure~\ref{f:ODex1} with initial conditions $u_0 = -x^4 +3x^3-5x^2/2 + 1/2$ for $x \in [0,1]$ shows that $s(t)$ does not have to be monotone decreasing.  
Here, $s(t)$ initially moves to the right driven by diffusion and then to the left as $u$ values decrease due to the consumption term. 

\begin{figure}
\centerline{\includegraphics[width=8cm]{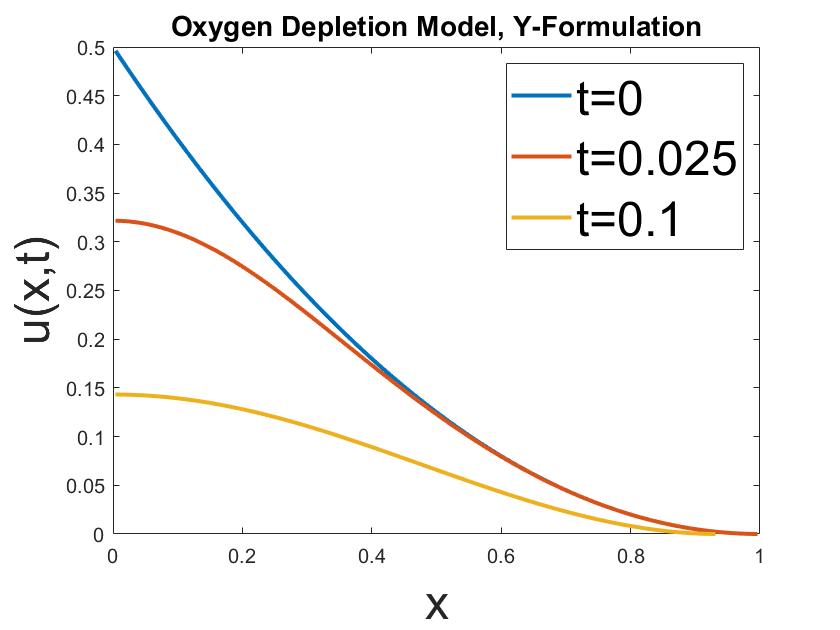}
\includegraphics[width=8cm]{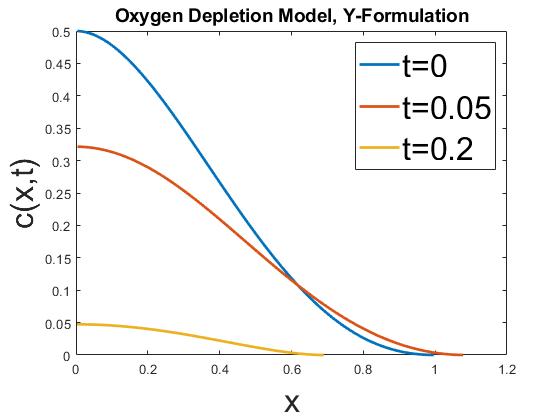}
}
\caption{\label{f:ODex1} 1D solutions of the OD problem without topological change. 
Left: initial conditions $u_0(x) = (1-x)^2/2$ (standard problem in the literature), $s(t)$ decreases monotonically. Right: initial conditions $u_0(x) = -x^4 +3x^3-5x^2/2 + 1/2$,  $s(t)$ initially moves to the right driven by diffusion and then to the left as $u$ values decrease due to the consumption term.} 
\end{figure}

\subsection{Gradient flow method}
\label{s:brians} 

In this section, we continue the discretization of the gradient flow formulation from Section~\ref{s:gradient} and discretize in space with $u^{i,j}_n \approx u(ih,jh,nk)$. We consider the discretization in two spatial dimension for ease of presentation but the argument extends to other dimensions. The energy minimization problem (\ref{eq_elli_mini}) is approximated by the discrete minimization of 
\begin{equation} \label{eq_dis_mini}
    E_{n+1}^N=\frac{h^2}{2}\sum_{i,j=0}^{N-1} (\left( \frac{u^{i+1,j}-u^{i,j}}{h}\right)^2+\left(\frac{u^{i,j+1}-u^{i,j}}{h}\right)^2)+h^2\sum_{i,j=0}^N\left(\frac{1}{2k} \left(u^{i,j}-u_n^{i,j}\right)^2+u^{i,j}\right),
\end{equation}
where $N$ the number of grid points with $N=1/h$, assuming without loss of generality that all positive values of $u$ are captured in $(0,1)\times(0,1)$. We solve this minimization problem subject to all non-negative discrete data $$\vec{u}_{n+1}^N\coloneqq (u^{1,1}_{n+1},u^{2,1}_{n+1},\cdots,u^{N,1}_{n+1},u^{1,2}_{n+1},\cdots, u^{N,N}_{n+1}).$$ This is a convex, quadratic minimization problem with linear, inequality constraints and so has a unique global minimum. We show below that the solution to the discrete optimization problem converges to the OD solutions as $h,k \rightarrow 0$. In Section~\ref{s:dopt} we discuss the technique we use to solve the discrete optimization problem. 
Denoting $M=T/k$, we use $\{\vec{u}_{n}\}_{n=1}$ to define an approximate solution:
\begin{equation}\label{eq_approx_sol}
    u_{N,M}(x,t)\coloneqq\left\{\begin{aligned}
  && (1-t)\cdot u_0(x)+t\cdot u_1^N(x) &&\mbox{for}\ t\in[0,k)&&\\&&\cdots&&\\
  &&(1-t)\cdot u_m^N(x)+t\cdot u_{m+1}^N(x)  &&\mbox{for}\ t\in[mk,(m+1)k)&&\\&&\cdots&&\\
  &&(1-t)\cdot u_{M-1}^N(x) +t\cdot u_{M}^N(x) &&\mbox{for}\ t\in[(M-1)k,T],&&
    \end{aligned}      \right.
\end{equation}
where $u_0(x)$ is the initial condition and for $1\leq m\leq M$ and $u_m^N$ is the linear (bilinear) approximation function. In particular when $x=(x_1,x_2)\in [ih,(i+1)h)\times [jh,(j+1)h)$ we define $u_m^N$ as follows:
\begin{equation}\label{3.6}\begin{aligned}
    u_m^N(x_1,x_2)=&h^{-2}\Big(u_m^{i,j}((i+1)h-x_1)((j+1)h-x_2)+u_m^{i,j+1}((i+1)h-x_1)(x_2-jh)\\
    &+u_m^{i+1,j}(x_1-ih)((j+1)h-x_2)+u_m^{i+1,j+1}(x_1-ih)(x_2-jh)\Big).
    \end{aligned}
\end{equation}
Therefore we have when $(x_1,x_2)\in (ih,(i+1)h)\times (jh,(j+1)h)$ 
\begin{align*}
&\pa_1 u_m^N(x_1,x_2)=h^{-2}\left((u^{i+1,j}_m-u^{i,j}_m)((j+1)h-x_2)+(u^{i+1,j+1}_m-u^{i,j+1}_m)(x_2-jh)\right),\\
&\pa_2 u_m^N(x_1,x_2)=h^{-2}\left((u^{i,j+1}_m-u^{i,j}_m)((i+1)h-x_1)+(u^{i+1,j+1}_m-u^{i+1,j}_m)(x_1-ih)\right).
\end{align*}Therefore, by direct computation we obtain that
\begin{align}\label{3.7}
\int_{jh}^{(j+1)h}\int_{ih}^{(i+1)h}(\pa_1 u_m^N)^2\ dx_1dx_2=&\frac13 (u_m^{i+1,j}-u^{i,j}_m)^2
+\frac13(u_m^{i+1,j+1}-u_m^{i,j+1})^2\\&+\frac13(u_m^{i+1,j+1}-u_m^{i,j+1})(u_m^{i+1,j}-u^{i,j}_m).\notag
\end{align}
Moreover the pointwise limit 
\begin{equation}
\label{eq:uj} 
u_m=\lim_{N\to\infty}u_m^N 
\end{equation}
exists. For convenience we also define
\begin{align}
\widetilde{u_m^N}(x)\coloneqq    \sum_{i,j=1}^{N} u^{i,j}_m\cdot\chi_{((i-1)h,ih)}(x_1)\chi_{((j-1)h,jh)}(x_2),
\end{align}
where $\chi_{I}$ is the characteristic function on the interval I. We also define
 \begin{equation} \label{eq_time_appro}
     u_{M}(x,t)\coloneqq(1-t)\cdot u_m(x)+t\cdot u_{m+1}(x)\ ,\ \mbox{for}\ t\in[mk,(m+1)k).
 \end{equation}
 \begin{remark}\label{rmk_unifconv}
 We observe that both approximations $u_m^N(x)$ and $\widetilde{u_m^N}(x)$ will converge to the same limit in $L^2(\Omega)$ as $N\to\infty$ and similarly for $u_{N,M}(x,t)$ and $\widetilde{u_{N,M}}(x,t)$ as $N\to\infty$. Arzel\`a-Ascoli theorem and the finite energy assumption then imply the uniform convergence as in Lemma~\ref{lem_RoMet}.
 \end{remark}

\begin{theorem}\label{thm_discrete_conv}
Suppose $\vec{u}_{n+1}^N$ solves the discrete minimization problem \eqref{eq_dis_mini}, then 
\begin{equation*}
    u(x,t)=\lim_{M\to\infty}\lim_{N\to\infty}u_{N,M}
\end{equation*}
with $h = 1/N$ and $k=T/M$ exists in $\mathcal{J}$ and $u$ is the solution to the variational inequality \eqref{eq_paravari} that is
\begin{align*}
    \int_0^t\int_\Omega u_t\cdot (v-u)+\int_0^t\int_\Omega \na u\cdot \na(v-u)\geq \int_0^t\int_\Omega u-v;\ \mbox{for all}\ v\in\mathcal{J},\ \mbox{a.e.}\ t\in(0,T). 
\end{align*}

\end{theorem}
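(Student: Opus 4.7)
The plan is to carry out the iterated limit in two stages: first, for $M$ (equivalently $k$) fixed, reduce the fully discrete scheme to the spatially continuous, temporally discrete minimization scheme of Section~\ref{s:gradient}; then invoke Lemma~\ref{lem_RoMet} (Rothe's method) to pass from that scheme to the parabolic variational inequality \eqref{eq_paravari}.

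\emph{Inner limit $N \to \infty$ with $M$ fixed.} I would proceed by induction on $m \in \{0,1,\ldots,M\}$, proving that the piecewise bilinear interpolant $u_m^N$ of the discrete minimizer converges in $H^1(\Omega)$ (and hence strongly in $L^2(\Omega)$) to the unique continuous minimizer $u_m \in H^1_+(\Omega)$ of $E_m[u] \coloneqq \int_\Omega \tfrac12 |\nabla u|^2 + \tfrac{1}{2k}(u-u_{m-1})^2 + u$. The base case is the nodal interpolation of the smooth initial datum $u_0$. For the inductive step, assume $u_{m-1}^N \to u_{m-1}$ strongly in $L^2(\Omega)$. Using the nodal interpolant of $u_{m-1}$ as a competitor in the discrete minimization yields a uniform-in-$N$ bound on the discrete energy \eqref{eq_dis_mini}; by the identities in \eqref{3.7}, the discrete gradient sum is norm-equivalent (independently of $h$) to $\|\nabla u_m^N\|_{L^2}^2$, so $u_m^N$ is bounded in $H^1(\Omega)$. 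Extracting a weakly convergent subsequence $u_m^N \rightharpoonup \tilde{u}$ in $H^1(\Omega)$ and strongly in $L^2(\Omega)$ (Rellich--Kondrachov), I would identify $\tilde{u} = u_m$ by a standard $\Gamma$-convergence argument: pointwise nonnegativity passes to the strong $L^2$ limit so $\tilde{u} \in H^1_+(\Omega)$; weak $H^1$ lower semicontinuity combined with strong $L^2$ convergence of $u_{m-1}^N$ gives $E_m[\tilde{u}] \leq \liminf_N E_m^N(u_m^N)$; and for any smooth $v \in H^1_+(\Omega)$, the nodal interpolant $v^N$ satisfies $E_m^N(v^N) \to E_m[v]$. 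By minimality of $u_m^N$ and density of smooth nonnegative functions, $E_m[\tilde{u}] \leq E_m[v]$ for all $v \in H^1_+(\Omega)$, and uniqueness of the minimizer gives $\tilde{u}=u_m$; the full sequence then converges.

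\emph{Outer limit $M \to \infty$.} With the inner limit established at each time step, the definitions \eqref{eq_approx_sol} and \eqref{eq_time_appro} imply that for each fixed $M$, $u_{N,M} \to u_M$ in $L^2((0,T);L^2(\Omega))$ as $N \to \infty$, where $u_M$ is the piecewise linear time interpolant of the continuous minimizers $\{u_m\}_{m=0}^M$. Lemma~\ref{lem_RoMet} then yields $u_M \to u \in \mathcal{J}$ solving \eqref{eq_paravari} as $M \to \infty$, completing the iterated limit.

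\emph{Main obstacle.} The subtlest point is threading the induction in the inner limit: both the recovery sequence and the passage to the limit in the quadratic penalty $\tfrac{1}{2k}(u - u_{m-1})^2$ require strong $L^2$ convergence of $u_{m-1}^N$ before step $m$ is analyzed. The uniform $H^1$ bound at stage $m-1$ combined with Rellich compactness provides exactly this, but keeping the constants independent of $h$ (in the equivalence between the forward-difference discrete energy \eqref{eq_dis_mini} and the bilinear $H^1$ seminorm of \eqref{3.7}) is essential and must be verified explicitly. A secondary technical point is the construction of a genuinely admissible nodal recovery sequence: one must check that nodal interpolation preserves nonnegativity, which is immediate for bilinear interpolation of nonnegative data.
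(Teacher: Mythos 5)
Your proposal follows essentially the same route as the paper: the spatial limit is handled by $\Gamma$-convergence of the discrete energies $E_m^N$ to $E_m$ (the liminf inequality via the identity \eqref{3.7} and weak lower semicontinuity, the recovery sequence via nodal interpolation) together with convergence of minimizers, and the temporal limit is then delegated to Lemma~\ref{lem_RoMet}. Your explicit treatment of the induction in $m$, the equicoercivity via a competitor, and the upgrade to full-sequence convergence by uniqueness are welcome refinements, but the argument is the paper's.
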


\noindent The proof relies on two lemmas. To start with, we give definitions of gamma convergence of energy functionals shown in Lemma \ref{lem_gaconv} as given in \cite{GaConv}:

\begin{defn}[Gamma convergence]
We say that the sequence of functionals $\{\mathcal{E}_l\} :X \to \R \cup \{-\infty, +\infty\}$
where $X$ is a metric space, $\Gamma$-converges to $\mathcal{E}$ if the following conditions are satisfied:
\begin{description}
      \item[i] whenever $x_l\to x$, $\mathcal{E}(x)\leq \liminf_l \mathcal{E}_l(x_l)$;
    \item[ii] for any $x\in X$, there exists $x_l\to x$ in $X$ such that $\limsup_l \mathcal{E}_l(x_l)\leq \mathcal{E}(x)$.
\end{description}
\end{defn}
\noindent The following is a relevant property of $\Gamma$-convergence:
\begin{prop}\label{prop_GaMin}
Given a metric space $X$ and suppose a sequence of functionals defined $\mathcal{E}_l$ defined in $X$ $\Gamma$-converges to $\cE$. Assume that for each $l$, $x_l$ is a minimizer of $\mathcal{E}_l$, and if $\bar{x}$ is a cluster point of $\{ x_l \}$, then $\bar{x}$ is a minimizer of $\cE$.
\end{prop}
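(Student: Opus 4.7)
The plan is to use the two defining properties of $\Gamma$-convergence to squeeze $\cE(\bar{x})$ between quantities that can both be compared to $\cE(y)$ for an arbitrary competitor $y \in X$. Since $\bar{x}$ is a cluster point of $\{x_l\}$, I can extract a subsequence, which I will relabel as $x_l$, so that $x_l \to \bar{x}$ in $X$. The liminf inequality (i) then gives at once
\[
\cE(\bar{x}) \leq \liminf_{l} \mathcal{E}_l(x_l).
\]

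Next I would fix an arbitrary $y \in X$ and invoke the limsup (recovery sequence) condition (ii) to produce a sequence $y_l \to y$ with $\limsup_l \mathcal{E}_l(y_l) \leq \cE(y)$. The minimality of $x_l$ for $\mathcal{E}_l$ gives $\mathcal{E}_l(x_l) \leq \mathcal{E}_l(y_l)$ for every $l$. Taking $\liminf$ on the left and $\limsup$ on the right of this pointwise inequality yields
\[
\liminf_{l} \mathcal{E}_l(x_l) \leq \limsup_{l} \mathcal{E}_l(y_l) \leq \cE(y).
\]

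Combining the two displays gives $\cE(\bar{x}) \leq \cE(y)$. Since $y$ was arbitrary, $\bar{x}$ minimizes $\cE$ over $X$, which is the claim. The only step requiring any care is the passage to a subsequence and the corresponding relabeling: after relabeling, property (i) still applies because it is stated for arbitrary convergent sequences, and property (ii) is used on a different index set (the full sequence), so there is no inconsistency. I do not anticipate any genuine obstacle — this is essentially the textbook fundamental theorem of $\Gamma$-convergence, and the role of the proposition in the paper is to serve as a black-box tool for Theorem \ref{thm_discrete_conv}. For completeness I would also remark that the argument shows the stronger conclusion that $\mathcal{E}_{l}(x_{l}) \to \cE(\bar{x})$ along the chosen subsequence, which is sometimes useful but is not required by the statement.
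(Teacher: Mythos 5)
Your proof is correct and is exactly the standard ``fundamental theorem of $\Gamma$-convergence'' argument; the paper itself does not write out a proof but simply defers to \cite{GaConv} (Corollary 7.20), which proves the result the same way. The only point you gloss over --- that the liminf inequality (i) applies along the extracted subsequence --- is handled by the standard fact that $\Gamma$-convergence passes to subsequences (or by filling in the subsequence to a full sequence converging to $\bar{x}$), so there is no genuine gap.
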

\noindent We refer the proof to \cite{GaConv} (Corollary 7.20.). We then consider the following energy functional:
\[
E_{n+1}(u_{n+1})=\int_\Omega \frac12 |\na u_{n+1}|^2+\frac{1}{2k} (u_{n+1}-u_n)^2+u_{n+1}
\]
where $u_{n+1}(x)$ is defined in (\ref{eq:uj}). 

\begin{lemma}[Gamma convergence of discrete functionals]\label{lem_gaconv}
For each $n$, $E_{n+1}^N$ $\Gamma$-converges to $E_{n+1}$ as $N\to \infty$ or equivalently $h\to 0$ in $L^2(\Omega)$. 
\end{lemma}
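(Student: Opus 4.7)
The plan is to verify the two defining conditions of $\Gamma$-convergence separately, after identifying the discrete data $\vec{u}^N$ with its bilinear interpolation $u^N$ (and with $\widetilde{u^N}$ where convenient) so that both $E_{n+1}^N$ and $E_{n+1}$ act on $L^2(\Omega)$, extended by $+\infty$ outside their natural domains.

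For the recovery sequence (condition ii), given $u \in H^1_+(\Omega)$ with $E_{n+1}(u) < \infty$, I would construct $\vec{u}^N$ by sampling or locally averaging $u$ at the grid points; a standard mollification and diagonal argument reduces matters to the case $u \in C^2(\bar\Omega)$. For smooth $u$, Taylor expansion gives $(u^{i+1,j}-u^{i,j})/h = \partial_1 u(ih,jh) + O(h)$, so the discrete gradient sum $\frac{h^2}{2}\sum((u^{i+1,j}-u^{i,j})/h)^2$ is a Riemann sum converging to $\frac{1}{2}\int|\na u|^2$. The quadratic penalty $\frac{h^2}{2k}\sum(u^{i,j}-u_n^{i,j})^2$ and the linear term $h^2\sum u^{i,j}$ similarly converge to $\frac{1}{2k}\int(u-u_n)^2$ and $\int u$, using the strong $L^2$ convergence of $u_n^N$ coming from the previous time step. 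Non-negativity of $\vec{u}^N$ (hence of $u^N$) is preserved by the sampling whenever $u\ge 0$.

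For the liminf (condition i), take $u_l^N \to u$ in $L^2(\Omega)$ with $\liminf_l E_{n+1}^{N_l}(\vec u_l^{N_l}) < +\infty$ (else nothing to prove). Introduce the piecewise constant forward differences
\[
D_\alpha^{N_l}(x) := \frac{u_l^{i+\delta_{\alpha 1},j+\delta_{\alpha 2}} - u_l^{i,j}}{h}, \qquad x\in[ih,(i+1)h)\times[jh,(j+1)h),
\]
so that the gradient part of the discrete energy equals exactly $\tfrac12(\|D_1^{N_l}\|_{L^2}^2 + \|D_2^{N_l}\|_{L^2}^2)$. Combining the cellwise identity (3.7) with the elementary inequality $A^2+AB+B^2 \leq \tfrac32(A^2+B^2)$ yields $\|\na u_l^N\|_{L^2}^2 \lesssim E_{n+1}^{N_l}(\vec u_l^{N_l})$, so $\{u_l^N\}$ is bounded in $H^1(\Omega)$. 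Passing to a subsequence, $u_l^N \rightharpoonup u$ weakly in $H^1$, whence $u \in H^1_+(\Omega)$.

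The heart of the argument is showing that $D_\alpha^{N_l} \rightharpoonup \partial_\alpha u$ weakly in $L^2$. I would test against $\phi \in C_c^\infty(\Omega)$, apply discrete summation by parts to move the forward difference from $u_l$ onto cell-averaged values of $\phi$, and pass to the limit using the strong $L^2$ convergence of $\widetilde{u_l^{N_l}}$, obtaining $\int D_1^{N_l}\phi \to -\int u\, \partial_{x_1}\phi = \int \partial_1 u\cdot \phi$, and similarly in $\alpha=2$. Weak lower semicontinuity of the $L^2$ norm then gives $\tfrac12\int|\na u|^2 \leq \liminf \tfrac12(\|D_1^{N_l}\|^2 + \|D_2^{N_l}\|^2)$; combined with Fatou/strong-$L^2$ lower semicontinuity for the quadratic and linear terms (both reducible to $\widetilde{u_l^{N_l}} \to u$ in $L^2$), one gets $E_{n+1}(u) \leq \liminf_l E_{n+1}^{N_l}(\vec u_l^{N_l})$. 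The main obstacle is the weak convergence step for the piecewise constant discrete gradients: the summation by parts must handle boundary cells carefully (compatible with the homogeneous Neumann condition) and control the mismatch between the interpolation nodes and the cell centers where the forward differences are defined. Once that is established, the rest reduces to standard weak/strong convergence bookkeeping.
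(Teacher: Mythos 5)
Your proposal follows essentially the same route as the paper's proof: a recovery sequence obtained by sampling $u$ at the grid nodes for the limsup inequality, and, for the liminf inequality, an $H^1$ bound on the interpolants derived from the cellwise identity \eqref{3.7} followed by weak lower semicontinuity, with the linear and quadratic terms handled through the strong $L^2$ convergence of $\widetilde{u_{n+1}^{N_l}}$. The only real difference is technical rather than structural—you carry the discrete gradient as piecewise-constant forward difference quotients and identify their weak limit by discrete summation by parts, where the paper works directly with $\nabla u_{n+1}^{N_l}$ and the inequality bounding $\int_\Omega|\nabla u_{n+1}^{N_l}|^2$ by the discrete sum—and your mollification step in the recovery sequence is in fact a welcome repair of the paper's pointwise sampling of an $H^1$ function, which is not well defined in dimension $d\geq 2$.
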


\begin{proof}[Proof of Lemma \ref{lem_gaconv}]
We follow the proof in \cite{GaConv}.

\noindent To show (i): let $u_{n+1}^N\in L^2(\Omega)$ such that $\liminf E_{n+1}^N(u_{n+1}^N)<+\infty$ and therefore there exists a subsequence $u_{n+1}^{N_l}$  such that $\lim E_{n+1}^{N_l}(u_{n+1}^{N_l})=\liminf E_{n+1}^N(u_{n+1}^N)$. For each $l$, there exists a mesh of grid points and a vector $\vec{u}_{n+1}^{N_l}$($\in\R^{N_l+2}\times \R^{N_l+2}$ in the 2D Neumann boundary condition case) such that the corresponding $u_{n+1}^{N_l}(x)$ is defined in \eqref{eq_approx_sol}-\eqref{3.6}. Then by the previous Remark~\ref{rmk_unifconv}, both $u_{n+1}^{N_l}$ and $\widetilde{u_{n+1}^{N_l}}$ converge to the same limit $u$ in $L^2$. By \eqref{3.7} we also have
\begin{align*}
h^2\sum_{i,j=0}^{N_l-1} (\left( \frac{u_{n+1}^{i+1,j}-u_{n+1}^{i,j}}{h}\right)^2+\left(\frac{u_{n+1}^{i,j+1}-u_{n+1}^{i,j}}{h}\right)^2)\ge\int_{\Omega}|\na u_{n+1}^{N_l}|^2.
\end{align*}
Thus
\begin{equation*}
    \int_\Omega |\na u|^2\leq\lim_l \int_\Omega |\na u_{n+1}^{N_l}|^2\leq \liminf_N h^2\sum_{i,j=0}^{N-1} (\left( \frac{u^{i+1,j}_{n+1}-u^{i,j}_{n+1}}{h}\right)^2+\left(\frac{u^{i,j+1}_{n+1}-u^{i,j}_{n+1}}{h}\right)^2).
\end{equation*}
On the other hand, 
\begin{equation*}
    h^2\cdot\sum_{i,j=0}^{N_l}\left(\frac{1}{2k} \left(u^{i,j}_{n+1}-u_n^{i,j}\right)^2+u_{n+1}^{i,j}\right)=\int_\Omega \frac{1}{2k}(\widetilde{u_{n+1}^{N_l}}-\widetilde{u_{n}^{N_l}})^2+\widetilde{u_{n+1}^{N_l}}.
\end{equation*}
Applying the uniform convergence we obtain that
\begin{equation*}
    \int_\Omega \frac{1}{2k}(u-u_n)^2+u\leq\lim_k \int_\Omega \frac{1}{2k}(\widetilde{u_{n+1}^{N_l}}-\widetilde{u_{n}^{N_l}})^2+\widetilde{u_{n+1}^{N_l}}\leq \liminf_N h^2\cdot\sum_{i,j=0}^N\left(\frac{1}{2k} \left(u_{n+1}^{i,j}-u_n^{i,j}\right)^2+u_{n+1}^{i,j}\right).
\end{equation*}
These two estimates lead to $E_{n+1}(u)\leq\liminf E_{n+1}^N(u_{n+1}^N)$.

It remains to prove (ii): suppose $u\in L^2$ with $E_{n+1}(u)<+\infty$, so $u\in H^1$. We then define $u^{i,j}_{n+1}\coloneqq u(\frac{i}{N},\frac{j}{N})$ which defines the vector $\vec{u}_{n+1}^N$ with the piecewise linear (bilinear) approximation $u_{n+1}^N(x)$ and piecewise constant approximation $\widetilde{u_{n+1}^N}(x)$. By the finite energy assumption, Arzel\`a-Ascoli theorem then guarantee the uniform convergence as in Remark \ref{rmk_unifconv}. It then follows that
\begin{equation*}
    \limsup E_{n+1}^N(u^N_{n+1})\leq E_{n+1}(u).
\end{equation*}
The 1D and 3D cases can be treated similarly.
\end{proof}
\noindent As a result of Lemma \ref{lem_gaconv} and Proposition \ref{prop_GaMin}, we obtain the following corollary immediately:
\begin{corollary}
Suppose $u^N_{n+1}$ are minimizers of $E_{n+1}^N$ then $u_{n+1}^N$ converges to a function $u_{n+1}$ in $L^2(\Omega)$ up to a subsequence as $h\to 0$ and such $u_{n+1}$ is the minimizer of $E_{n+1}$.  
\end{corollary}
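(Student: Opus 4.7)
The plan is to combine the $\Gamma$-convergence established in Lemma~\ref{lem_gaconv} with the abstract principle in Proposition~\ref{prop_GaMin}. Proposition~\ref{prop_GaMin} already delivers the conclusion whenever the sequence $\{u^N_{n+1}\}$ admits a cluster point in $L^2(\Omega)$, so the only ingredient not directly supplied by the cited results is precompactness of the discrete minimizers in $L^2(\Omega)$.

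First I would produce a uniform bound on the discrete energies $E^N_{n+1}(u^N_{n+1})$. Since $u^N_{n+1}$ is a minimizer of $E_{n+1}^N$ on the non-negative cone, any admissible competitor gives an upper bound; the most convenient choice is the nodal restriction $\vec{u}_n^N$ of $u_n$, which is non-negative and Neumann-compatible. This yields $E^N_{n+1}(u^N_{n+1}) \leq E^N_{n+1}(\vec{u}_n^N) \leq C$ with $C$ depending only on $u_n$ (through $\|\nabla u_n\|_{L^2}$ and $\|u_n\|_{L^1}$) and independent of $h$.

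Second I would translate this discrete bound into a uniform $H^1(\Omega)$ bound on the piecewise bilinear interpolant $u^N_{n+1}(x)$ introduced in \eqref{3.6}. Identity \eqref{3.7} is precisely the statement that the continuous Dirichlet energy of the interpolant on each cell is comparable to the discrete sum of squared differences appearing in \eqref{eq_dis_mini}; an analogous elementary estimate compares the $L^2$ norm of the interpolant with $h^2 \sum_{i,j}|u^{i,j}_{n+1}|^2$. Consequently $\{u^N_{n+1}\}$ is uniformly bounded in $H^1(\Omega)$, and since bilinear interpolation preserves non-negativity at the nodes, each interpolant belongs to $H^1_+(\Omega)$.

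Third I invoke the Rellich--Kondrachov embedding to extract a subsequence $u^{N_l}_{n+1}$ converging strongly in $L^2(\Omega)$ to a limit $u_{n+1}$; closedness of $H^1_+(\Omega)$ under $L^2$ convergence places $u_{n+1}$ in $H^1_+(\Omega)$. Lemma~\ref{lem_gaconv} supplies the $\Gamma$-convergence hypothesis of Proposition~\ref{prop_GaMin}, so this cluster point must minimize $E_{n+1}$. The main (and only mild) obstacle is carrying out step two cleanly: one has to verify that the two-sided comparison between the discrete energy \eqref{eq_dis_mini} and the continuous $H^1$ energy of the interpolant is genuinely uniform in $h$, for which \eqref{3.7} together with its elementary $L^2$ counterpart does all the work.
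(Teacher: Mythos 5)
Your proposal takes essentially the same route as the paper: the authors obtain this corollary ``immediately'' from Lemma~\ref{lem_gaconv} and Proposition~\ref{prop_GaMin}, leaving the $L^2(\Omega)$-precompactness of the discrete minimizers implicit (it is only hinted at in Remark~\ref{rmk_unifconv} via the finite-energy assumption and Arzel\`a--Ascoli). Your explicit compactness step --- the uniform energy bound obtained by testing against the nodal restriction of $u_n$, the comparison \eqref{3.7} between the discrete and interpolated Dirichlet energies, and Rellich--Kondrachov --- correctly supplies the cluster point that Proposition~\ref{prop_GaMin} requires, so the argument is complete and consistent with the paper's.
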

\noindent Now that $u_m\coloneqq\lim_{N}u_m^N$ is the minimizer of the continuous functional $E_{m}$ for $m=1,\cdots, M$; it remains to show that $u(x,t)=\lim_{M\to\infty}u_M(x,t)$ solves the variational inequality \eqref{eq_paravari}. Recalling the Rothe's Method (Lemma \ref{lem_RoMet}) and combining results of Lemma \ref{lem_gaconv} and Lemma \ref{lem_RoMet}, we therefore complete the proof of Theorem \ref{thm_discrete_conv}.

 \subsubsection{Discrete Optimization Scheme}
 \label{s:dopt}
 
We consider the details of the discrete optimization problem (\ref{eq_dis_mini}) and present the scheme in the 2D case. (Note that this scheme holds in 1D and 3D similarly.) 
The corresponding Lagrangian problem is 
 \begin{align*}
    & -\Delta_h u +\frac{u}{k}+\lambda = \frac{u_n}{k}-1,&&\\
    & \lambda_{(j_1,j_2)} < 0,\  u^{(j_1,j_2)} = 0 &&\forall (j_1,j_2)\in J\\
    &\lambda_{(i_1,i_2)}=0,\ u^{(i_1,i_2)} \geq 0 &&\forall (i_1,i_2)\in I,
 \end{align*}
where $I$ and $J$ are a disjoint partition of the grid points and $\Delta_h$ is the finite difference Laplacian. The $u$ in the problem is the grid vector at the next time step $u_{n+1}$. The partitions divide those points $J$ where the values are at the constraint and those points $I$ (``$I$'' for inactive constraint) with positive solution values where the corresponding derivative of $E^N$ must be zero.
The method is an active set method, where the sets $J$ and $I$ are updated iteratively at each time step. 
Note that $\lambda_{(j_1,j_2)} < 0$ for $(j_1,j_2) \in J$ corresponds to $\partial E^N/\partial u^{(j_1,j_2)} >0$, a necessary and sufficient condition for optimality (the KKT conditions \cite{KKT}).  
There are many techniques available to solve such quadratic optimization problems with linear inequality constraints. We take advantage of the simple structure of the problem and the fact that there is little change in the index sets from one time step to the next in the following algorithm. It is an iterative algorithm with vectors $u^{(m)}$, $\lambda^{(m)}$ at each iteration. The matrix $A = I/k - \Delta_h$, where $I$ is the identity. 

 \paragraph{Algorithm}
 \begin{description}
       \item[Step 1] Initialize $u^{(0)} \geq 0$ (component-wise), $\lambda^{(0)} = \min\{0, \frac{u_n}{k}-1-Au^{(0)} \}$. Set $m=0$. Repeat steps 2-5 until the convergence criteria in step 3 is reached. 
       \item[Step 2] Construct the index sets
        \begin{equation*}
        \begin{aligned}
           J^{(m)}=\{(j_1,j_2):\la^{(m),(j_1,j_2)}<0\},\\
            I^{(m)}=\{(j_1,j_2):\la^{(m),(j_1,j_2)}=0\}. 
        \end{aligned}
        \end{equation*}
        For any $(i_1,i_2) \in I^{(m)}$ such that $u^{(m),(i_1,i_2)}< 0$, move $(i_1,i_2)$ to $J^{(m)}$. 
         
         \item[Step 3] If $J^{(m)}=J^{(m-1)}$, the solution $u=u^{(m)}$. Stop. 
         \item [Step 4] Solve for $u^{(m+1)}$ and $\lambda$ using 
         \begin{align*}
             &Au^{(m+1)}+\la=\frac{u_n}{k}-1,\\
             &\la=0 \ \mbox{on}\ I^{(m)},\\
             &u^{(m+1)}=0\ \mbox{on}\ J^{(m)}.
         \end{align*}
         This is equivalent to solving sequentially for  $(u^{(m+1)},\la)$ that satisfy
       \begin{align*}
           &A_{II} u^{(m+1)}_I=(\frac{u_n}{k}-1)_I,\\
           &u^{(m+1)}_J=0,\\
           &\lambda =\frac{u_n}{k}-1-A u^{(m+1)}.
       \end{align*}
Here vector subscripts $I$ and $J$ give the sub-vectors with those components and $A_{II}$ is the block of the matrix A corresponding to the $I$ components. 
\item[Step 5] Update $\lambda^{(m+1)}=\min\{0,\lambda \}$. Increment $m$. 
\end{description}

\begin{theorem}
Let $0\leq u^{(0)} \leq u$ (component-wise). The algorithm above converges in finitely many steps.
\end{theorem}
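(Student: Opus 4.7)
The plan is to prove finite termination by exhibiting a monotonically decreasing chain of active sets, exploiting the M-matrix structure of $A = I/k - \Delta_h$. First I would record that $A$ is symmetric positive-definite with positive diagonal and non-positive off-diagonal entries, hence both $A^{-1}$ and every principal-submatrix inverse $A_{II}^{-1}$ have non-negative entries componentwise. I would also recall the KKT characterization of the true discrete optimizer $u$: there exists a multiplier $\lambda^{*} \leq 0$ with $Au + \lambda^{*} = b$, where $b = u_n/k - 1$, and $u_i \lambda^{*}_i = 0$ for every index $i$.

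The heart of the argument is an induction establishing the joint invariants $0 \leq u^{(m)} \leq u^{(m+1)} \leq u$ and $J^{(m+1)} \subseteq J^{(m)}$. For the upper bound, subtracting the reduced system $A_{I^{(m)} I^{(m)}} u^{(m+1)}_{I^{(m)}} = b_{I^{(m)}}$ from the KKT row block $A_{I^{(m)} I^{(m)}} u_{I^{(m)}} + A_{I^{(m)} J^{(m)}} u_{J^{(m)}} = b_{I^{(m)}} - \lambda^{*}_{I^{(m)}}$ yields
\[
A_{I^{(m)} I^{(m)}}\bigl(u_{I^{(m)}} - u^{(m+1)}_{I^{(m)}}\bigr) = -\lambda^{*}_{I^{(m)}} - A_{I^{(m)} J^{(m)}} u_{J^{(m)}} \geq 0,
\]
since $\lambda^{*} \leq 0$, $u \geq 0$, and the off-diagonal entries satisfy $A_{IJ} \leq 0$; applying $A_{II}^{-1} \geq 0$ gives $u^{(m+1)}_{I^{(m)}} \leq u_{I^{(m)}}$, and combined with $u^{(m+1)}_{J^{(m)}} = 0 \leq u_{J^{(m)}}$ the upper bound propagates. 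The nesting $J^{(m+1)} \subseteq J^{(m)}$ is essentially built into the algorithm: Step 4 enforces $\lambda_{I^{(m)}} = 0$, so after the truncation $\lambda^{(m+1)} = \min\{0, \lambda\}$ of Step 5 one still has $\lambda^{(m+1)}_{I^{(m)}} = 0$, forcing the sign-based part of $J^{(m+1)}$ to sit inside $J^{(m)}$. Monotonicity $u^{(m+1)} \geq u^{(m)}$ is shown by an analogous M-matrix comparison, using $u^{(m)}_{J^{(m-1)}} = 0$ and the sign of the pre-truncation multiplier $\tilde{\lambda}^{(m)} \geq 0$ on the set $K = J^{(m-1)} \setminus J^{(m)}$ (which holds because Step 5 replaced positive entries by zero). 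In particular, monotonicity together with $u^{(0)} \geq 0$ yields $u^{(m)} \geq 0$ throughout, so Step 2's relocation of negative-$u$ indices into $J^{(m)}$ is vacuous and cannot enlarge the active set.

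With the nested decreasing chain $J^{(0)} \supseteq J^{(1)} \supseteq \cdots$ of subsets of a finite grid in hand, the chain must stabilize in at most $|J^{(0)}| + 1 \leq N^d + 1$ iterations, at which point Step 3's termination criterion $J^{(m)} = J^{(m-1)}$ is triggered. The main obstacle I anticipate is the first iteration: the arbitrary starting guess $u^{(0)}$ does not satisfy any reduced-system identity, so the monotonicity $u^{(1)} \geq u^{(0)}$ and non-negativity $u^{(1)} \geq 0$ must be deduced directly from $\lambda^{(0)} = \min\{0, b - Au^{(0)}\}$ together with the hypothesis $u^{(0)} \leq u$. This requires a separate ad hoc argument, though the M-matrix inversion machinery still applies. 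A secondary subtlety is the bookkeeping around the truncation in Step 5: the identity $Au^{(m)} + \lambda^{(m)} = b$ fails at indices where the untruncated multiplier from Step 4 was positive, so the inductive comparison between iterates must be formulated in terms of the pre-truncation multiplier $\tilde{\lambda}^{(m)}$ to preserve the clean nesting of active sets on which the finite termination count depends.
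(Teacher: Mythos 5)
Your proposal follows essentially the same route as the paper's proof, which likewise argues finite termination via monotone behaviour of the index sets using the facts that $A_{II}^{-1}$ has non-negative entries and $A_{IJ}$ has non-positive entries for any index partition; the paper only sketches this, deferring details to the cited active-set literature for the elliptic obstacle problem. Your fleshed-out version, including the comparison with the exact KKT solution and the flagged subtleties at the first iteration and in the Step~5 truncation, is a faithful elaboration of the same argument.
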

  
 \begin{proof}
A proof is found following closely the ideas from \cite{AugLag_actset} for a similar approach to the elliptic obstacle problem. 
Monotone behaviour in the index sets $I^{(m)}$ is shown and since $N$ is finite, the algorithm converges in finite steps. 
Use is made of the properties that the sub-matrix $A_{II}^{-1}$ has positive entries ($A_{II}$ is monotone) and $A_{IJ}$ has non-positive entries (values zero or $-1/h^2$) for any index sets $I$ and $J$.  
\end{proof}
 
 
 
\begin{remark}
\label{rem:dopt}
While the proof of iteration convergence above is limited to starting conditions $0\leq u^{(0)} \leq u$, we implement the method with $u^{(0)} = u_n$ and starting index sets from the converged iterations at time step $n$. This initialization falls out of the scope of the analysis but works well (no failures, few iterations) in practice. 
\end{remark} 

\begin{remark}
\label{rem:active} 
Similar index (active set) iteration methods have been used in capturing methods for other implicit boundary value problems. Two of these are discussed in Section~\ref{s:other}. A general theory for the convergence of these iteration strategies is not known, but they can perform well in practice. 
\end{remark} 

\subsubsection{Numerical Results} 
\label{s:numerical2} 

We show results in 1D with topological change in Figure~\ref{f:ODex2}. Initial conditions are 
\[
u_0(x) = \left\{
\begin{array}{ll}  
((1/3-x).^2+0.05)/((5/12)^2+0.05)/16 & \mbox{\ $x \in [0,3/4]$} \\
(1-x)^2 & \mbox{\ $x \in [3/4,1]$} \\
0 & \mbox{\ $x \geq 1$} 
\end{array} 
\right. 
\]

A 2D example is shown in Figure~\ref{f:ODex3}. This example has more complicated topological changes described in the figure caption. Based on evidence from other computations, the limiting circular shape is generic.  

\begin{figure}
\centerline{
\includegraphics[width=8cm]{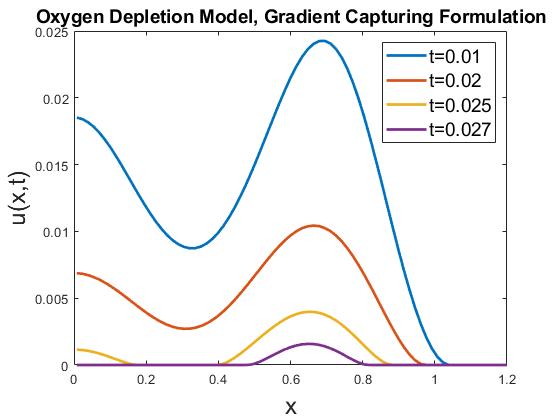}
}
\caption{\label{f:ODex2} 1D solution of the OD problem with topological changes with the gradient flow method. Initial conditions are given in Section~\ref{s:numerical2}.}
\end{figure}
 
\begin{figure}
\centerline{\includegraphics[width=6cm]{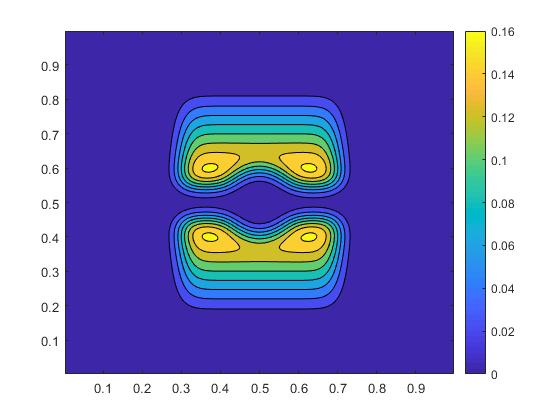}
	\includegraphics[width=6cm]{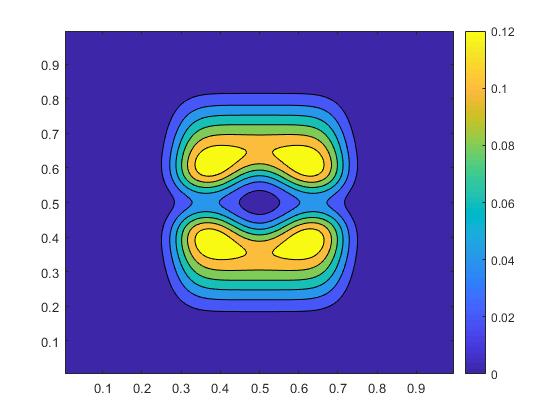}}
\centerline{\includegraphics[width=6cm]{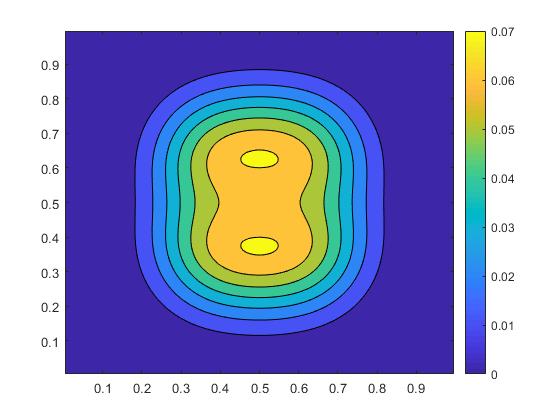}
		\includegraphics[width=6cm]{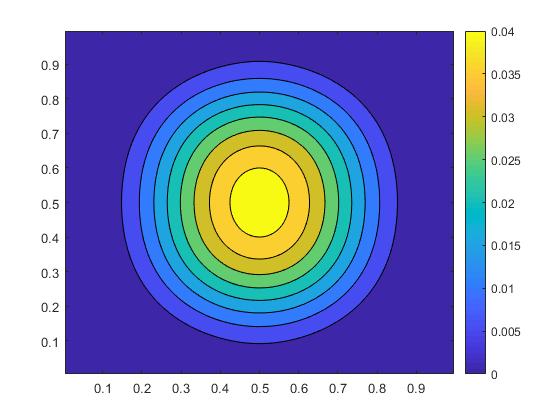}}
\caption{\label{f:ODex3}A 2D solution of the OD problem with topological change computed with the gradient flow method. Solution contours are shown with time increasing from top left to bottom right. Overall solution levels decrease in time with depletion. Two topological changes occur. The first is the merger of the two disjoint sets of $u > 0$, the second is the disappearance of the centre $u=0$ set.} 
\end{figure}


\section{Other Implicit Free Boundary Value Problems}
\label{s:other} 

\subsection{A biharmonic problem} 

The OD problem is the simplest second order implicit free boundary problem. The simplest fourth order problem is the following biharmonic problem shown in 1D for $u(x,t)$:
\[
u_t = -u_{xxxx} -1 
\]
with conditions $u = 0$, $u_x=0$, and $u_{xxx} = 0$ at the implicitly defined free boundary $x=s(t)$ and $u\equiv 0$ for $x>s(t)$. This can be derived from the scaled, linear, viscoelastic motion of a beam above a flat, rigid surface. Note that another boundary value problem occurs if $u_{xxx} = 0$ is replaced by $u_{xx} = 0$. However, the third order condition is correct for this application \cite{biharmonic} and also gives the gradient flow structure described below. 

We consider the time discretization of this problem as in Section~\ref{s:gradient} and see that it is a discrete $L_2$ gradient flow on the energy 
\[
\cE^{n}\coloneqq \int\frac12 |\Delta u_{n}|^2+u_{n}
\]
with $u_n \in H_+^2$. We form a fully discrete scheme as was done in Section~\ref{s:brians} and compute the discrete optimization at each time step using index iterations as described in Section~\ref{s:dopt}. The convergence of the method follows the same ideas as presented for the OD problem. Some computational results are shown in Figure~\ref{f:bih}. 

\begin{figure}
\centerline{
\includegraphics[width=12cm]{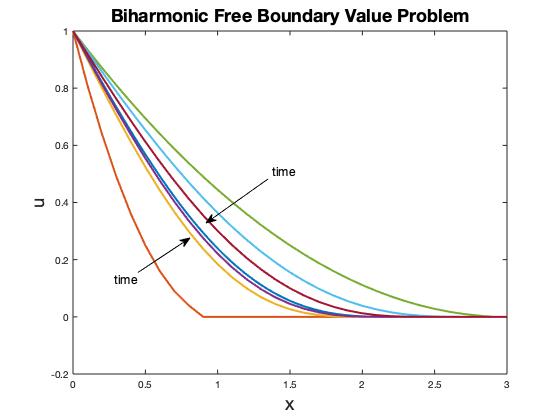}}
\caption{Two computations at three times each for the biharmonic free boundary value problem with physical boundary conditions $u(0) = 1$, $u_{xx} = 0$ approaching the analytic steady state solution shown in dark blue.
\label{f:bih}}
\end{figure}

\begin{remark}
\label{r:bihobs}
There has been considerable mathematical interest in the elliptic obstacle problem as discussed in the introduction. This is the steady state of the OD problem with nonzero physical boundary conditions. The steady state of the biharmonic problem (in higher dimensions) described in this section would also be mathematically interesting. Its analysis would be complicated by the lack of a maximum principle. 
\end{remark}

\subsection{Vector problems} 
\label{s:vector} 

The free boundary in complex fluids with yield stress is of implicit type and is well studied \cite{wachs}. Numerical approaches include regularization (increased viscosity in the unyielded region) and an Augmented Lagrangian approach to the non-smooth optimization problem that comes from a discretization of a variational inequality formulation. The literature on this problem is focussed on capturing the unyielded region rather than considering the free boundary directly. 

Implicit free boundaries in porous media flow can occur when phase change is present. Boundaries between dry and two-phase (where there is liquid and vapour present) regions were studied in \cite{huax,M2map}. The work in \cite{M2map} had important implications to simulations of water management in fuel cells. However, many theoretical questions were left unanswered and this became the motivation of the corresponding author to attempt the current work. 

We present below a class of implicit free boundary value problems that generalizes the OD problem. The problems are presented in 1D with a single free boundary at $x=s(t)$ with ${\bf u}^l(x,t)$ having $n$ components for $x < s(t)$ and ${\bf u}^r(x,t)$ having $m$ components for $x > s(t)$. Near the interface we take 
\[
{\bf u}^*_t = D^*{\bf u}^*_{xx} + {\bf a}^*
\]
for $* \in \{l,r\}$, $D^*$ positive diagonal matrices, and ${\bf a}^*$ constant vectors. At the boundary, we take 
\[
B 
\left[ \begin{array}{c} {\bf u}^l \\ {\bf u}^l_x \\ {\bf u}^r \\ {\bf u}^r_x \end{array} \right]
= {\bf 0}
\]
where $B$ is an $(m+n+1) \times (2m + 2n)$ matrix of full rank. This class can be reached from a wider class by taking affine combinations of solution components and $x$, and as an approximation of some nonlinear problems. A problem statement can be made by adding far field conditions, $n$ on the left and $m$ on the right. With these far field conditions we label the class as {\em n+m} implicit free boundary value problems. The OD problem is the only well defined example of the 1+0 class.  The model in \cite{M2map} is of class 2+2, although one of the components has degenerate diffusion at the free boundary. 

There are several open questions related to problems of this type motivated by the current work on the OD problem. Which lead to well defined problems? (this could depend on the sign of entries of $\bf a$ as discussed in Remark~\ref{rem:yproblem}). Which have gradient flow or variational inequality structure? Which allow a capturing formulation with index iteration similar to that described in Section~\ref{s:dopt}? (true of the model in \cite{M2map}). 

\section{Summary} 

This work summarizes the ways the Oxygen Depletion problem has been considered in the literature: with interfaces to be tracked, captured, or found as a limit of regularized problems. We fill in a gap in the list of formulations, showing that the OD problem can be considered as a gradient flow with constraint. A new numerical capturing method based on the  gradient flow formulation is proposed and a convergence proof given. The equivalence of all formulations is shown. A biharmonic implicit free boundary value problem and a class of vector problems are introduced. 

Several open problems have been presented in the work and are summarized here:
\begin{itemize}
\item The regularity of boundary point positions in 1D (Conjecture~\ref{con:S}) and higher dimensions. 
\item A direct analysis of the mapped domain formulation discussed in Remark~\ref{rem:yproblem} that would possibly extend to a convergence proof of its numerical approximation (Section~\ref{s:ymethod}) and an understanding of the general class of vector problems in Section~\ref{s:vector}. 
\item A convergence proof for the index iteration (active set) approach that has been successful in discretizations of implicit free boundary value problems, see Remarks~\ref{rem:dopt} and~\ref{rem:active}. 
\item A study of the biharmonic obstacle problem discussed in Remark~\ref{r:bihobs}. 
\item An understanding of the general class of vector problems introduced in Section~\ref{s:vector}.  
\end{itemize}
We hope the reader will find some of these problems of interest.

\section*{Acknowledgement}
Cheng is partially supported by the International Doctoral Fellowship (IDF) provided by the University of British Columbia and Shanghai "Super Postdoc" Incentive Plan. Fu is supported by a PhD Fellowship from the South University of Science and Technology. Wetton is supported by an NSERC Canada research grant. We thank Gwynn Elfring for pointing out the literature for the boundary conditions for the viscoelastic beam problem.  

\appendix

\section{Proof of Proposition~\ref{th:minimizer}}
\label{s:prop}

With help of the minimality of $u$, we consider a competing function $u+\varepsilon \phi$ where $\phi$ is an arbitrary smooth function that is compactly supported inside $\Omega$. By the definition of $\tilde{E}[u]$, it follows that 
\begin{equation*}
    \begin{aligned}
    \tilde{E}[u+\varepsilon \phi]\geq \tilde{E}[u],
    \end{aligned}
\end{equation*}
that is 
\begin{equation}\label{eq_1}
\begin{aligned}
    \varepsilon \int_\Omega \nabla u\cdot \nabla \phi+\frac{\varepsilon^2}{2}\int_\Omega |\nabla \phi|^2+\frac{\varepsilon}{k}\int_\Omega u\phi +\frac{\varepsilon^2}{2k}\int_\Omega \phi^2\geq -\int_\Omega (1-\frac{u_n}{k})[(u+\varepsilon \phi)^+-u].
    \end{aligned}
\end{equation}
Note that 
\begin{equation*}
\begin{aligned}
   \int_\Omega (1-\frac{u_n}{k})[(u+\varepsilon \phi)^+-u]=\varepsilon \int_{\{u+\varepsilon \phi\geq 0\}}(1-\frac{u_n}{k})\phi-\int_{\{u+\varepsilon\phi<0\}}(1-\frac{u_n}{k})u,
    \end{aligned}
\end{equation*}
ignoring the $O(\varepsilon^2)$ terms in \eqref{eq_1}, we have
\begin{equation}\label{eq_2}
\begin{aligned}
    \varepsilon \int_\Omega \nabla u\cdot \nabla \phi+\frac{\varepsilon}{k}\int_\Omega u\phi -\varepsilon\int_{\{u+\varepsilon \phi\geq 0\}}(1-\frac{u_n}{k})^-\phi+\int_{\{u+\varepsilon \phi< 0\}}(1-\frac{u_n}{k})^-u\\\geq -\varepsilon \int_{\{u+\varepsilon \phi\geq 0\}}(1-\frac{u_n}{k})^+\phi +\int_{\{u+\varepsilon \phi<0\}}(1-\frac{u_n}{k})^+ u.
    \end{aligned}
\end{equation}
In fact we have
 $$0\leq\int_{\{u+\varepsilon \phi<0\}}(1-\frac{u_n}{k})^{\pm}u<-\varepsilon \int_{\{u+\varepsilon \phi<0\}}(1-\frac{u_n}{k})^{\pm}\phi, $$
hence \eqref{eq_2} turns out to be
\begin{equation*}
\begin{aligned}
    \int_\Omega \nabla u\cdot \nabla \phi+\frac{1}{k}\int_\Omega u\phi -\int_{\{u+\varepsilon \phi\geq 0\}}(1-\frac{u_n}{k})^-\phi-\int_{\{u+\varepsilon \phi< 0\}}(1-\frac{u_n}{k})^-\phi\geq - \int_{\{u+\varepsilon \phi\geq 0\}}(1-\frac{u_n}{k})^+\phi.
    \end{aligned}
\end{equation*}
Moreover, we also recall that $u\geq 0$,  then in $L^1$ sense as $\varepsilon\to 0$,
\begin{equation*}
    \left\{
    \begin{aligned}
        &\chi_{\{u+\varepsilon \phi\geq 0\}}\to \chi_{A_\phi\cup\{u>0\}}\\
        &\chi_{\{u+\varepsilon \phi<0\}}\to \chi_{\{u=0\}\cap\{\phi<0\}},
    \end{aligned}
    \right.
\end{equation*}
where $A_\phi\coloneqq \{u=0\}\cap \{\phi\geq 0\}$. Clearly, $A_\phi$ and $\{u>0\}$ are disjoint.
This leads to 
\begin{equation*}
\begin{aligned}
    \int_\Omega \nabla u\cdot \nabla \phi+\frac{1}{k}\int_\Omega u\phi -\int_\Omega\chi_{A_\phi\cup\{u>0\}} (1-\frac{u_n}{k})^-\phi-\int_\Omega\chi_{\{u=0\}\cap\{\phi<0\}}(1-\frac{u_n}{k})^-\phi\\
    \geq - \int_\Omega\chi_{A_\phi\cup\{u>0\}}(1-\frac{u_n}{k})^+\phi,
    \end{aligned}
\end{equation*}
or equivalently,
\begin{equation}\label{eq_3}
\begin{aligned}
    \int_\Omega \nabla u\cdot \nabla \phi+\frac{1}{k}\int_\Omega u\phi +\int_\Omega\chi_{A_\phi\cup\{u>0\}} (1-\frac{u_n}{k})\phi-\int_\Omega\chi_{\{u=0\}\cap\{\phi<0\}}(1-\frac{u_n}{k})^-\phi\geq 0.
    \end{aligned}
\end{equation}

Define a distribution $$T(\phi)\coloneqq  \int_\Omega \nabla u\cdot \nabla \phi+\frac{1}{k}\int_\Omega u\phi +\int_\Omega\chi_{\{u>0\}} (1-\frac{u_n}{k})\phi, $$
then by \eqref{eq_3},
\begin{align*}
    T(\phi)\geq -\int_{A_\phi} (1-\frac{u_n}{k})\phi +\int_{\{u=0\}\cap\{\phi<0\}}(1-\frac{u_n}{k})^-\phi.
\end{align*}
Since $\phi$ is arbitrary, we may replace it with $-\phi$ and as a result,
\begin{equation}\label{eq_4}
\left\{
\begin{aligned}
    &T(\phi)\geq -\int_{A_\phi} (1-\frac{u_n}{k})\phi +\int_{\{u=0\}\cap\{\phi<0\}}(1-\frac{u_n}{k})^-\phi\\
    &T(\phi)\leq -\int_{\{u=0\}\cap\{\phi\leq0\}}(1-\frac{u_n}{k})\phi+\int_{\{u=0\}\cap\{\phi>0\}}(1-\frac{u_n}{k})^-\phi.
\end{aligned}\right.
\end{equation}
Therefore, $|T(\phi)|\leq C||\phi||_\infty$ for some positive constant $C$, thus by a density argument we derive that $T$ is a radon measure, i.e. there exists a density function $\rho(x)$ such that
$$T(\phi)=\int_{\Omega}\rho \phi \ dx.$$
However, by \eqref{eq_4}, we get $\rho=0$ a.e. in $\{u>0\}$; moreover, by definition of $T$ we get $\rho=0$ a.e. in $\{u=0\}$. This shows that $T(\phi)=0$, or
$$-\Delta u+\frac{1}{k} u+ \chi_{\{u>0\}}(1-\frac{u_n}{k})=0$$
in the weak sense. Equivalently,
$$\frac{u-u_n\cdot \chi_{\{u>0\}}}{k}=\Delta u-\chi_{\{u>0\}} .$$

\section{Proof of Theorem~\ref{th:epsilon}}
\label{s:eproof} 

As the discussion in Section~\ref{s:regular} above showed, $u=\lim_{\epsilon\to 0}u_\epsilon$ exists pointwisely by monotonicity. It remains to show $u$ is the solution to \eqref{eq_paravari}, that is
\begin{align*}
    \int_0^t\int_\Omega\partial_t u\cdot (v-u)+\int_0^t\int_\Omega \na u\cdot \na(v-u)\geq \int_0^t\int_\Omega u-v;\ \mbox{for all}\ v\in\mathcal{J},\ \mbox{a.e.}\ t\in(0,T). 
\end{align*} 
Intuitively, suppose that $f$ is a smooth approximation, then by maximum principle $|\na u_\epsilon|\leq \sup_{\Omega}|\na u_0|$ for any $x\in\Omega$ and $\epsilon>0$. 
Thus $|\na u|\leq \sup|\na u_0|$, therefore by Dini's Theorem, such convergence is uniform and as a result, $u\in \mathcal{J}$ because $u$ also satisfies the boundary condition and initial condition. 
Once we have such uniform boundedness of $\na u_{\epsilon}$, $\na u_\epsilon$ converges to $\na u$ weakly and as a result,
\begin{equation*}
   \lim_{\epsilon}\int_0^t\int_\Omega \na u_{\epsilon}\cdot \na(v-u)=\int_0^t\int_\Omega \na u\cdot\na(v-u)
\end{equation*}
and
\begin{equation*}
    \lim_{\epsilon}\int_0^t\int_\Omega -f_\epsilon(u_\epsilon)\cdot(v-u)=-\int_0^t\int_\Omega \chi_{\{u>0\}}\cdot(v-u)=-\int_0^t\int_\Omega v-u+\int_0^t\int_\Omega\chi_{\{u=0\}}\cdot v.
\end{equation*}
Indeed we have weak convergence of $\partial_t u_\epsilon$ thanks to the equation:
\begin{equation*}
    \lim_{\epsilon}\int_0^t\int_\Omega \partial_t u_\epsilon\cdot(v-u)=\lim_\epsilon \int_0^t\int_\Omega-f(u_\epsilon)\cdot (v-u)-\na u_\epsilon\cdot\na(v-u).
\end{equation*}
Since $u_\epsilon$ converges to $u$ pointwisely and strongly in $L^2((0,T);L^2(\Omega))$, then up to a subsequence
\begin{equation*}
    \lim_{\epsilon}\int_0^t\int_\Omega \partial_t u_\epsilon\cdot(v-u)=\int_0^t\int_\Omega\partial_tu\cdot(v-u).
\end{equation*}
Note that $v\geq 0$,
\begin{equation*}
    -\int_0^t\int_\Omega \chi_{\{u>0\}}\cdot(v-u)\geq- \int_0^t\int_\Omega v-u .
\end{equation*}
therefore
\begin{align*}
    \int_0^t\int_\Omega\partial_t u\cdot (v-u)+\int_0^t\int_\Omega \na u\cdot \na(v-u)\geq \int_0^t\int_\Omega u-v;\ \mbox{for all}\ v\in\mathcal{J},\ \mbox{a.e.}\ t\in(0,T). 
\end{align*}
Indeed, we only require the $H^1$ uniform boundedness of $u_\epsilon$. To see this without using smooth $f(u_\epsilon)$ we write down $u_\epsilon$ in the mild form:
\begin{equation*}
u_\epsilon(t)=e^{ t\Delta}u_0+\int_0^te^{(t-s)\Delta}(f_\epsilon(u_\epsilon))\ ds\ ,
\end{equation*}
where $e^{t\Delta}$ represents convolution with heat kernel. As a result, for any first order differential operator $D$ we have
\begin{align*}
    D u_\epsilon=De^{t\Delta}u_0+\int_0^tDe^{(t-s)\Delta}(f_\epsilon(u_\epsilon))\ ds
\end{align*}
and hence
\begin{equation*}
\begin{aligned}
\Vert Du_\epsilon\Vert_2\leq \Vert De^{ t\Delta}u_0\Vert_2+\int_0^t\Vert De^{(t-s)\Delta}f(u_\epsilon)\Vert_2\ ds.
\end{aligned}
\end{equation*}
Note that $e^{t\Delta}u_0$ solves the standard heat equation with initial data $u_0$, we have
\begin{equation*}
   \Vert De^{t\Delta}u_0\Vert_2=\Vert e^{t\Delta}Du_0\Vert_2\lesssim\Vert Du_0\Vert_{2}\lesssim1,
\end{equation*}
for any $t\in(0,T)$. On the other hand,
\begin{equation*}
\begin{aligned}
\Vert De^{(t-s)\Delta}f(u_\epsilon)\Vert_2\lesssim\Vert De^{(t-s)\Delta}f(u_\epsilon)\Vert_\infty=
|K*f(u_\epsilon)|\ , 
\end{aligned}
\end{equation*}
where $K$ is the kernel corresponding to $De^{(t-s)\Delta}$. Since $|f|\leq 1$,
\begin{equation*}
\begin{aligned}
|K*f(u_\epsilon)|&\leq\Vert K\Vert_2\cdot\Vert f(u_\epsilon)\Vert_2\\
&\lesssim\Vert K\Vert_2.
\end{aligned}
\end{equation*}
We see that from the Fourier side
\begin{equation*}
\begin{aligned}
\Vert K\Vert_2^2&\lesssim\sum_{k\in\Z^d}|k|^2e^{-2(t-s)|k|^2}\\
&=\sum_{|k|\geq 1}|k|^2e^{-2(t-s)|k|^2}\\
&\lesssim\int_1^\infty e^{-2(t-s)r^2}r^{1+d}\ dr\ .
\end{aligned}
\end{equation*}
For the 1D case, first we observe that
\begin{align*}
    \int_1^\infty e^{-2(t-s)r^2}r^2\ dr=&\frac{\sqrt{2\pi}[1-\erf(\sqrt{2(t-s)})]+4\sqrt{t-s}e^{-2(t-s)}}{16(t-s)^{3/2}}\\
    \lesssim&\frac{1-\erf(\sqrt{2(t-s)})}{(t-s)^{3/2}}+\frac{e^{-2(t-s)}}{t-s},
\end{align*} 
where $\erf(x)=\frac{2}{\sqrt{\pi}}\int_0^xe^{-t^2}\ dt$, the Gauss error function. Therefore,
\begin{equation*}
    \Vert De^{\gamma\Delta}f(u_\epsilon)\Vert_2\lesssim\frac{\left(1-\erf(\sqrt{2(t-s)})\right)^{1/2}}{(t-s)^{3/4}}+\frac{e^{-(t-s)}}{(t-s)^{1/2}}.
\end{equation*}
Now we would assume $t\geq 1$, as the other case $t<1$ is easier. Let $\gamma=t-s$, we split the following integral into 2 parts:
\begin{align*}
\int_0^t\Vert De^{\gamma\Delta}f(u_\epsilon)\Vert_2\ d\gamma=\int_0^1\Vert De^{\gamma\Delta}f(u_\epsilon)\Vert_2\ d\gamma+\int_1^t\Vert De^{\gamma\Delta}f(u_\epsilon)\Vert_2\ d\gamma.
\end{align*}

\begin{description}
      \item[(i) $\gamma>1$:]
 Then we have 
 \begin{equation*}
     \frac{\left(1-\erf(\sqrt{2\gamma})\right)^{1/2}}{\gamma^{3/4}}\lesssim \frac{e^{-\gamma}}{\gamma^{5/4}},
 \end{equation*}
 thus
\begin{equation*}
\begin{aligned}
\int_1^t\Vert De^{\gamma\Delta}f(u_\epsilon)\Vert_2\ d\gamma&\lesssim\int_1^t \frac{e^{- \gamma}}{\gamma^{3/4}}+\frac{e^{-\gamma}}{\gamma^{1/2}}\ d\gamma\\&\lesssim
\int_1^t\frac{e^{- \gamma}}{\gamma^{1/2}}\ d\gamma\\
&\lesssim \int_1^\infty \frac{e^{- \gamma}}{\gamma^{1/2}}\ d\gamma\\
&\lesssim 1\ .
\end{aligned}
\end{equation*}
\item[(ii) $\gamma\leq1$:] We use another estimate for $\Vert K*f(u_\epsilon)\Vert _2$. We compute from the Fourier side:
\begin{equation*}
\begin{aligned}
\Vert K*f(u_\epsilon)\Vert _2^2&=\sum_{|k|\geq 1}|k|^2e^{-2\gamma|k|^2}|\widehat{f(u_\epsilon)}(k)|^2\\
&\leq \max_{|k|\geq 1}\left\{|k|^2e^{-2\gamma|k|^2}\right\}\cdot \sum_{|k|\geq 1}|\widehat{f(u_\epsilon)}(k)|^2\\
&\lesssim \max_{|k|\geq 1}\left\{|k|^2e^{-2\ga|k|^2}\right\}\cdot\Vert f(u_\epsilon)\Vert _2^2\\
&\lesssim \max_{|k|\geq 1}\left\{|k|^2e^{-2\ga|k|^2}\right\}.
\end{aligned}
\end{equation*}
Define $g(x)=x^2e^{-2\gamma x^2}$, where $x\geq 0$. Then,

\begin{equation*}
g'(x)=xe^{-2 \gamma x^2}\left(1-2\gamma x^2\right)\ ,
\end{equation*}
this shows the maximum achieves at $x=\frac{1}{\sqrt{2\gamma}}$ and hence 
\begin{equation*}
g(x)\leq g(\frac{1}{\sqrt{2\gamma}})\leq\frac{1}{\gamma}\,
\end{equation*}
thus 
\begin{equation*}
\Vert De^{\gamma\Delta}f(u_\epsilon)\Vert _2\lesssim\frac{1}{\sqrt{\gamma}}\ ,
\end{equation*}
As a result, 
\begin{equation*}
\begin{aligned}
\int_0^1\Vert De^{\gamma\Delta}f(u_\epsilon)\Vert _2\ d\gamma\lesssim \int_0^1\frac{1}{\sqrt{\gamma}}\ d\gamma\cdot\Vert f(u_\epsilon)\Vert _2\lesssim 1\ .
\end{aligned}
\end{equation*}
\end{description}
Similar arguments can be applied to the 2D and 3D cases. In what follows,
\begin{equation*}
    \Vert Du_\epsilon\Vert_2\lesssim1,
\end{equation*}
for any $t\in(0,T)$ and the bound is independent of $\epsilon$. 

\section{Another Proof of the Regularization Result}
\label{s:AugL} 
We recall the variational inequality setting \eqref{eq_paravari}, that is to solve $u\in H^1_+(\Omega)$
 \begin{equation*}
     \int_0^t\langle \partial_t u-\Delta u+1, v-u\rangle \geq 0, \ \forall\ v\in \mathcal{J}. 
 \end{equation*}
 As in \cite{AugLag_paravar}, it then has an equivalent formulation, that is to solve $u(t)$ and $\la^*(t)$:
 \begin{equation}\label{eq:ALM}\left\{
     \begin{aligned}
      &\partial_t u-\Delta u+1=-\la^*(t)\geq 0\\
      &u\geq 0,\ \langle u(t),\la^*(t)\rangle=0,\ \forall\ t>0.
     \end{aligned}\right.
 \end{equation}
 To approach this, we introduce a regularized approximation family: we aim to find $u_c$ for any $c>0$ such that the following holds weakly:
 \begin{align*}
     \partial_t u_c -\Delta u^c+1+\min\left(0,-1+cu^c\right)=0.
 \end{align*}
 By defining $\la^c=\min\left(0,-1+cu^c\right)$, we can rewrite the above scheme as 
 \begin{align*}
      \partial_t u^c -\Delta u^c+1+\la^c=0.
 \end{align*}
 It is typical to write the regularization term in this way in some literature, but the approach is the same as the regularization in Section~\ref{s:regular} with $c=1/\epsilon$. 
 We then discretize it in time: for any $\phi\in H^1$, the following holds
 \begin{equation}\label{eq:ALMBE}
     \left\langle\frac{u_{n+1}^c-u_n^c}{k},\phi\right\rangle+\left\langle\nabla u_{n+1}^c,\nabla \phi\right\rangle+\langle1,\phi\rangle+\left\langle\min(0, -1+cu_{n+1}^c),\phi\right\rangle=0,
 \end{equation}
where $u^c_0$ is chosen to be $u_0$. We write $u_n$ instead of $u_n^c$ for simplicity. Note that the operator $A(u)\coloneqq \frac{u}{k}-\Delta u+\min(0,-1+cu)$ is coercive and monotone. As a result, there exists a unique solution $u_{n+1}\in H^1$ for sufficiently small $k>0$ independent of $c>0$. To show $u_{n+1}\in H^1_+(\Omega)$, we prove by induction. Assuming $u_n\in H^1_+(\Omega)$, we test the \eqref{eq:ALMBE} with $(u_{n+1})^-$. Therefore we derive that 
 \begin{align*}
     &\frac1k\langle u_{n+1},(u_{n+1})^-\rangle+\left\langle\nabla u_{n+1},\nabla (u_{n+1})^-\right\rangle+\langle1,(u_{n+1})^-\rangle+\left\langle\min(0, -1+cu_{n+1}),(u_{n+1})^-\right\rangle\\=&\frac1k\langle u_{n},(u_{n+1})^-\rangle\leq 0.
 \end{align*}
 We observe that $\left\langle\nabla u_{n+1},\nabla (u_{n+1})^-\right\rangle=\left\langle\nabla (u_{n+1})^-,\nabla (u_{n+1})^-\right\rangle\geq0$. Moreover, $\langle1,(u_{n+1})^-\rangle+\langle\min(0, -1+cu_{n+1}),(u_{n+1})^-\rangle=c\langle (u_{n+1})^-,(u_{n+1})^-\rangle\geq0$. We thus obtain that
 $\langle u_{n+1}, (u_{n+1})^-\rangle\leq 0$ and hence $u_{n+1}\in H^1_+(\Omega)$. We then define
 $$u^c_{M}(x,t)=u_n+\frac{t-nk}{k}(u_{n+1}-u_n),\ \mbox{for}\ t\in[nk,(n+1)k),$$
where $M=T/k$. By the same argument in Lemma~\ref{lem_RoMet}, we have $u^c_M$ converges to function $u^c$ in $L^2(0,T;H^1)$ as $M\to\infty$ up to a subsequence. In fact, it is easy to see that $u_c$ is the solution to \eqref{eq:ALMBE}. On the other hand, we show that $u^c$ converges to $u^*$ as $c\to \infty$.
\begin{theorem}[Monotonicity]
Let $u_{n+1}^c$ and $u^c$ be defined as above. If $0<c\leq b $, then $u_{n+1}^c\geq u_{n+1}^b$ for all $n=0,1,2,\cdots$. Therefore $u^c(t)\geq u^b(t)$ as a direct application. 
\end{theorem}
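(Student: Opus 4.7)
The plan is induction on $n$ combined with a short monotonicity observation about the penalty term $t\mapsto\min(0,-1+t)$, followed by passage to the continuous-time limit using the convergence already established immediately before the theorem.

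\emph{Induction on $n$.} The base case $n=0$ is immediate since $u_0^c=u_0^b=u_0$. Assume $u_n^c\geq u_n^b$ a.e., and set $w\coloneqq u_{n+1}^b-u_{n+1}^c\in H^1(\Omega)$. Subtracting \eqref{eq:ALMBE} for $c$ from the version for $b$ and testing against $\phi=w^+$ (which lies in $H^1(\Omega)$ by the Stampacchia lemma), one gets, using $\langle w,w^+\rangle=\Vert w^+\Vert_2^2$ and $\langle\nabla w,\nabla w^+\rangle=\Vert\nabla w^+\Vert_2^2$,
\begin{equation*}
\tfrac{1}{k}\Vert w^+\Vert_2^2+\Vert\nabla w^+\Vert_2^2
=\tfrac{1}{k}\langle u_n^b-u_n^c,w^+\rangle-\langle R,w^+\rangle,
\qquad R\coloneqq\min(0,-1+bu_{n+1}^b)-\min(0,-1+cu_{n+1}^c).
\end{equation*}
The first term on the right is nonpositive by the inductive hypothesis together with $w^+\geq 0$. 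The goal is therefore to prove $\langle R,w^+\rangle\geq 0$.

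\emph{Penalty monotonicity.} The scalar map $t\mapsto\min(0,-1+t)$ is nondecreasing on $\R$. On the set $\{w>0\}=\{u_{n+1}^b>u_{n+1}^c\}$, the nonnegativity $u_{n+1}^c\geq 0$ established before the theorem together with $b\geq c>0$ gives
\begin{equation*}
b\,u_{n+1}^b \;\geq\; c\,u_{n+1}^b \;>\; c\,u_{n+1}^c\qquad\text{a.e.\ on }\{w>0\},
\end{equation*}
so monotonicity yields $R\geq 0$ a.e.\ on $\{w>0\}$. Since $w^+$ is supported in $\{w>0\}$ up to a null set and $w^+\geq 0$, $\langle R,w^+\rangle\geq 0$. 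Combining this with the previous display forces $\Vert w^+\Vert_2^2\leq 0$, hence $w^+\equiv 0$, i.e.\ $u_{n+1}^b\leq u_{n+1}^c$.

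\emph{Continuous-time limit.} The pointwise inequalities $u_n^c\geq u_n^b$ for every $n$ immediately yield $u_M^c\geq u_M^b$ a.e.\ on $(0,T)\times\Omega$ for the piecewise-linear interpolants. Since the cone $\{f\in L^2((0,T)\times\Omega):f\geq 0\}$ is closed and convex, it is weakly closed in $L^2(0,T;H^1(\Omega))$; passing to the common subsequence along which both $u_M^c\rightharpoonup u^c$ and $u_M^b\rightharpoonup u^b$ (as in the paragraph preceding the theorem) then preserves the inequality, giving $u^c(t)\geq u^b(t)$ for a.e.\ $t\in(0,T)$.

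\emph{Expected obstacle.} The argument is largely bookkeeping and contains no delicate estimate. The only real points to check are admissibility of $w^+$ as a test function in the weak form of \eqref{eq:ALMBE} (standard Stampacchia truncation) and the need to extract a single subsequence along which both $u^c_M$ and $u^b_M$ converge so that the discrete inequality can be transferred to the limits simultaneously; this is routine.
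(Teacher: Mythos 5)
Your proposal is correct and follows essentially the same route as the paper: induction on $n$, testing the difference of the two backward-Euler equations against the appropriate signed part of $u_{n+1}^b-u_{n+1}^c$, and using the monotonicity of $t\mapsto\min(0,-1+t)$ together with nonnegativity of the iterates to show the penalty term has the right sign (your $\langle R,w^+\rangle\geq 0$ is exactly the paper's $\langle \lambda^c_{n+1}-\lambda^b_{n+1},(u^c_{n+1}-u^b_{n+1})^-\rangle\geq 0$ up to the sign convention). Your treatment of the continuous-time limit via weak closedness of the nonnegative cone is slightly more explicit than the paper's, but the argument is the same.
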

\begin{proof}
The proof is given by induction. Suppose $u_n^c\geq u_n^b$ and for each $n$ define $\la_n^c$ by 
$$\la_{n+1}^c=\min(0,-1+cu_{n+1}^c).$$
Then the proof is similar to the one showing $u_{n+1}^c\geq0$, we have that
\begin{align*}
     &\frac1k\langle u_{n+1}^c-u_{n+1}^b,(u_{n+1}^c-u_{n+1}^b)^-\rangle+\left\langle\nabla( u_{n+1}^c-u_{n+1}^b),\nabla (u_{n+1}^c-u_{n+1}^b)^-\right\rangle
     \\&+\left\langle \la^c_{n+1}-\la^b_{n+1},(u_{n+1}^c-u_{n+1}^b)^-\right\rangle=\frac1k\langle u^{n}_c-u_n^b,(u_{n+1}^c-u_{n+1}^b)^-\rangle\leq 0.
 \end{align*}
Note that $cu^c_{n+1}-bu^b_{n+1}\leq cu^c_{n+1}-cu^b_{n+1}$ for $c\leq b$ and hence
$\langle\la^c_{n+1}-\la^b_{n+1},(u_{n+1}^c-u_{n+1}^b)^-\rangle\geq0$. We thus obtain that $u^c_{n+1}\geq u_b$.

\end{proof}
As a corollary of the monotonicity, we obtain the existence of $u(t)$ and it solves \eqref{eq:ALM}. Uniqueness can be proved similarly as in \cite{AugLag_paravar}.

\bibliographystyle{abbrv}
\bibliography{refOD}

\end{document}